\newtheorem{theorem}{Theorem}[section]
\newtheorem{lemma}{Lemma}[section]
\newtheorem{proposition}{Proposition}[section]
\newtheoremstyle{intro}
  {12pt}
  {6pt}
  {\itshape}
  {}
  {\bfseries}
  {.}
  {.5em}
  {}
\theoremstyle{intro}
\newtheorem{introthm}{Theorem}
\newtheorem{intropro}{Problem}
\theoremstyle{remark}
\newtheorem{remark}{Remark}[section]
\newtheorem*{claim*}{Claim}
\theoremstyle{definition}
\newtheorem{example}{Example}[section]
\newtheorem{definition}{Definition}[section]
\title[Symplectic realizations of holomorphic Poisson manifolds]
{Symplectic realizations of holomorphic Poisson manifolds}
\thanks{Research partially supported by the National Science
 Foundation grants  DMS1101827,  DMS1406668,  DMS-1707545, and DMS-200159.}
\author{Damien Broka}
\address{Department of Mathematics, Penn State University}
\email{damien.broka@psu.edu} 
\author{Ping Xu}
\address{Department of Mathematics, Penn State University}
\email{ping@math.psu.edu}
\begin{document}

\renewcommand{\lin}[1]{\ell_{#1}}
\newcommand{\ham}[1]{\cH(#1)}
\newcommand{\LLie}{\mathbb{L}\mathrm{ie}}
\newcommand{\pomega}{\underline\omega} 
\newcommand{\ppi}{\underline\pi} 
\newcommand{\pN}{\underline{N}} 
\newcommand{\eq}[1]{Eq.\ (\ref{#1})}
\newcommand{\uomega}{\pomega} 
\newcommand{\uJ}{\underline J} 
\newcommand{\bJ}{\bar J} 
\newcommand{\pJ}{\uJ} 
\newcommand{\had}{\widehat{\ad}} 
\renewcommand{\cD}{\DD} 
\newcommand{\tP}{\tilde P} 
\renewcommand{\P}{P} 
\newcommand{\Pl}{\P_{\text{loc}}} 
\newcommand{\bPl}{\bar\P_{\text{loc}}} 
\newcommand{\rgpd}{\Sigma} 
\newcommand{\bP}{\bar{P}} 
\newcommand{\bN}{{\bar{N}}} 
\newcommand{\tN}{\bN}
\newcommand{\bpi}{\bar\pi} 
\renewcommand{\bemol}{^\flat} 
\newcommand{\tomega}{\tilde{\omega}} 
\renewcommand{\bomega}{\overline{\omega}} 
\newcommand{\can}{\text{can}} 
\newcommand{\diff}{d} 
\newcommand{\flippy}{j} 
\newcommand{\pairy}{i} 
\newcommand{\flippypairy}{\flippy'} 
\newcommand{\tpi}{\tilde\pi} 

\renewcommand{\transpose}[1]{\ensuremath{#1^{\scriptscriptstyle T}}}

\newcommand{\hf}[1]{\mathcal{O}_{#1}} 
\newcommand{\cotalg}[1]{ (T^*#1)_{\pi}}
\newcommand{\longtalg}[1]{(T#1, \text{pr}: T#1 \to #1, [\cdot, \cdot]_{\text{Lie}})}
\newcommand{\talg}[1]{T#1}

\newcommand{\holom}[1]{\ensuremath{\mathscr{#1}}}
\renewcommand{\smooth}[1]{#1}
\renewcommand{\smalcirc}{\circ}

\newcommand\numberthis{\addtocounter{equation}{1}\tag{\theequation}}

\newcommand{\rood}[1]{\textcolor{red}{#1}}
\newcommand{\blauw}[1]{\textcolor{blue}{#1}}
\newcommand{\oranje}[1]{\textcolor{orange}{#1}}
\newcommand{\Liealgebroid}{Chevalley–Eilenberg  differential }

\begin{abstract}
Symplectic realization  is a longstanding problem which can be traced back to
Sophus Lie. In this paper, we present an explicit solution to this problem for
an arbitrary holomorphic Poisson manifold. More precisely, 
for any holomorphic Poisson manifold $(\holom{X}, \pi)$ with underlying real smooth manifold $X$, we prove that there exists a 
holomorphic symplectic  structure in a neighborhood $Y$ of the zero section
of $T^*X$ such that the projection map is a holomorphic
symplectic realization of the given holomorphic Poisson manifold, and moreover  the
zero section is a holomorphic Lagrangian submanifold.
We describe an explicit construction for such  a new holomorphic  symplectic
 structure on $Y \subseteq T^*X$.
\end{abstract}

\maketitle

\section{Introduction}

The notion  of  ``symplectic realizations"
 can   be   traced back to  Sophus Lie who used  the name  ``function group".
In \cite{MR1510035}, Lie  defined a ``function group" as a collection
of functions of the canonical variables $(q_{1}, \cdots , q_{n},
p_{1}, \cdots , p_{n})$ which is a subalgebra under the
canonical Poisson bracket and  generated by a finite
number  of independent functions $\phi_{1}, \cdots , \phi_{r}$.
In  modern language, this means that $\RR^{r}$ has a
Poisson structure induced from the canonical
symplectic structure $\RR^{2n}$ in the sense
that $\Phi= (\phi_{1}, \cdots , \phi_{r}): \RR^{2n} \lon \RR^{r}$
is a Poisson map. In the $C^\infty$-context,  a symplectic realization  of 
a Poisson manifold $M$,   as defined by Weinstein \cite{MR723816}
(called a  full  symplectic realization), is a Poisson map from a 
symplectic manifold $V$ to $M$ which is a surjective submersion.
Since  Sophus Lie's treasure work
on the theory of  transformation group \cite{MR1510035}, the following
has become a central question:

\begin{intropro}\label{problem-a}
Does a symplectic realization always exist for a given Poisson manifold?
\end{intropro}

In fact, this question is closely related to Lie's theory on Lie groups.
To get a flavor of this, consider the  Lie-Poisson manifold $\dualstar \frakg$
corresponding to a finite dimensional
 Lie algebra $\frakg$. A natural choice of a  
symplectic realization is $\Phi : T^*G \to \dualstar \frakg $ with the canonical
cotangent  symplectic structure on $T^*G$ and $\Phi$ being the
left translation, where    $G$ is  a Lie group   with 
Lie algebra $\frakg$, and $\dualstar \frakg\cong T_e^*G$. 
Lie himself  proved that  a symplectic realization always exists locally for
 any smooth  Poisson manifold of constant rank \cite{MR1510035}.
A local  existence  theorem for symplectic realizations of general
smooth Poisson manifolds, was proved by Weinstein  in 1983 \cite{MR723816}.
Subsequently, Karasev \cite{MR854594} and    Weinstein \cite{MR866024} 
independently proved the global existence  theorem by  gluing methods. 
Indeed,
 they proved a stronger result: for any 
	 $C^\infty$-Poisson manifold,   there exists
 an essentially unique, distinguished, symplectic realization which possesses
a compatible local {\em groupoid} structure \cite{MR866024}, a device which is
 now altogether called  a symplectic local groupoid \cite{MR996653}. 
 Furthermore,
 the infinitesimal object corresponding to this local Lie groupoid -- its so-called Lie algebroid, as introduced by Pradines
\cite{MR0216409} --  
can be proved \cite{MR996653} to be isomorphic to the cotangent Lie algebroid 
$\cotalg{M}$ canonically associated to the Poisson
manifold $(M, \pi)$. The  bracket of this 
Lie algebroid essentially extends the natural
Lie bracket relation on exact forms:
 $[df, dg]_* = d\{ f, g\}$  in an obvious way.
For more details on symplectic local groupoids, see \cite{MR996653, MR1214142, MR866024}.
\color{black}

For a given Poisson manifold $(M, \pi)$,  the pair of Lie algebroids
 $(\cotalg{M}, \talg{M})$, where $\talg{M}$ is the standard tangent
Lie  algebroid of $M$, constitute an example of the so-called
{\em Lie bialgebroids} \cite{MR1262213}.  From the theory of integration of
 Lie bialgebroids of Mackenzie--Xu \cite{MR1746902} (which
  extends  the  classical theory of Drinfeld 
\cite{MR688240, MR934283} for  integrating Lie bialgebras), it follows that, under some mild topological assumption,  a Lie groupoid
with Lie algebroid $\cotalg{M}$ automatically carries a compatible symplectic
 structure, and is therefore a symplectic groupoid. As a consequence,
 any \emph{local} Lie  groupoid with Lie algebroid $\cotalg{M}$ -- the existence of which 
 is guaranteed \cite{MR0216409} -- gives {\em automatically}
 a symplectic realization of the underlying Poisson manifold.
In this  way, the Mackenzie--Xu integration method provided
 an alternative proof of  the
existence of global symplectic realizations \cite{MR1746902}. However, 
all these results are existence results and are not constructive.

In 2001, while investigating Poisson sigma models, 
Cattaneo--Felder \cite{MR1938552} discovered an explicit construction 
for the symplectic groupoid of an integrable Poisson manifold. 
Over the past 20 years, this construction --- 
a certain quotient space of the Banach manifold 
of what would eventually be recognized in~\cite{MR1973056} 
as $A$-paths in the cotangent Lie algebroid of $M$ --- 
inspired many important works in Poisson geometry,
among which the solution to the problem of integrability 
of Lie algebroids \cite{MR1973056}. 
Although local symplectic groupoids are not mentioned explicitly in~\cite{MR1938552},
an explicit construction of local symplectic groupoids is essentially given 
in~\cite[Theorem 4.7]{MR1938552} since the hypothesis 
\cite[Assumption 4.6]{MR1938552} actually holds in a neighborhood of the unit space. 
We refer the interested reader to \cite{MR2063018} for more details
on the relation between the approach to the integration of general Lie
algebroids developed in \cite{MR1973056} and the results on the integration 
of Poisson manifolds exposed in~\cite{MR1938552}.  
In particular, it is shown in~\cite{MR2063018} that, 
provided integrability is assumed, 
the integration construction for Lie algebroids can be seen 
as a particular case of the integration construction for Poisson manifolds.
Furthermore, around the same time, \v{S}evera observed independently 
that the approach of \cite{MR1938552} can be generalized to all Lie algebroids 
\cite{severa2001title}.
The main novelty in~\cite{MR1973056} is the precise integrability criterion 
guaranteeing the existence of global smooth Lie groupoids.
\color{black}

Although a lot of works focus on symplectic realizations
in the $C^\infty$-context,  very little exists and is known  in the
 holomorphic  context.
 A holomorphic Poisson manifold is a complex manifold $\holom{X}$ whose sheaf
of holomorphic functions $\hf{\holom{X}}$ is a sheaf of Poisson algebras.
Symplectic realizations can be defined in a similar fashion as
in the $C^\infty$-case. Thus a natural question  is

\begin{intropro}\label{problem-b}
  Does a symplectic realization always exist for a given holomorphic 
Poisson manifold? And, if so, is it possible to describe an explicit  construction of a certain class of distinguished ones?
\end{intropro}

To  any  holomorphic Poisson manifold $(\holom{X}, \pi)$ with underlying real smooth manifold $\smooth{X}$,
one associates two $C^\infty$-Poisson 
 bivector fields.
To see this,  write the holomorphic  Poisson tensor
$\pi \in \Gamma(\wedge^2 T^{1,0}\smooth{X})$ as $\pi_R + i\pi_I $,  where
 $\pi_R$ and $\pi_I \in \Gamma(\wedge^2 T \smooth{X})$ are bivector fields. Then
both $\pi_R$ and $ \pi_I$ are indeed  $C^\infty$-Poisson
 bivector fields \cite{MR2439547}.
In 2009, Laurent-Gengoux, Sti\'enon and Xu proved that a holomorphic
 Poisson manifold is integrable if and only if
 either  $(X, \pi_R)$, or $(X, \pi_I)$ are integrable
 as a real $C^\infty$-Poisson manifold (Theorem 3.22 \cite{MR2545872}).
Since any  $C^\infty$-Poisson manifold admits a symplectic local groupoid,
as a consequence,  this result  of Laurent-Gengoux, Sti\'enon and Xu
implies that symplectic realizations do exist for any 
holomorphic Poisson manifolds. However, 
the conclusion is again not constructive. The purpose of the  present
paper is to describe an explicit construction of 
 such a holomorphic symplectic local groupoid
based on
the  Cattaneo-Felder's Poisson sigma model approach \cite{MR1938552},
and therefore to give an explicit affirmative answer to Problem \ref{problem-b}.

Our approach is based on the observation that a holomorphic Poisson manifold
$(\holom{X}, \pi)$, where $\pi =\pi_R+i \pi_I$,  gives rise to a
 Poisson--Nijenhuis \cite{MR1077465, MR773513} 
structure $(X, \pi_I, J)$  on the underlying real manifold $X$
such that $\pi^\sharp_R = \pi_I^\sharp \smalcirc J^T$
 \cite{MR2439547}, where $J: TX\to TX$ is the underlying
almost complex structure. Indeed,  
holomorphic Poisson manifolds are equivalent to a special
class of    Poisson--Nijenhuis manifolds, namely those where   the
Nijenhuis tensor is almost complex. Therefore,
holomorphic symplectic local groupoids are  equivalent to
a special   class of symplectic-Nijenhuis local groupoids 
in the sense of Sti\'enon--Xu \cite{MR2276462}. Our goal is to describe an explicit
 construction of such a symplectic-Nijenhuis local groupoid.
For this purpose, it suffices to construct  explicitly
two compatible symplectic structures on the local groupoid.

At this point, we must also mention the recent work 
of Crainic-M{\v{a}}rcu{\c{t}}
\cite{MR2900786}, where they present a very simple  explicit
 construction of a symplectic realization of an arbitrary  $C^\infty$-Poisson
manifold $(M, \pi)$ on an open neighborhood of $T^*M$. In fact, another goal of our paper is to present
a conceptual proof of their  theorem.
The idea is quite simple indeed. Given  a local Lie groupoid $\rgpd$
with Lie algebroid $A$,  it is well known that, by choosing an $A$-connection
on $A$, one can construct a local diffeomorphism --
the exponential map \cite{MR1687747} --- from an open neighborhood of
 the zero section of $A$ onto an open neighborhood of the unit space
in $\rgpd$.
Now, if $\rgpd$ is a  
local symplectic groupoid, its Lie algebroid $A$ is known to be
isomorphic to $\cotalg{M}$ --- see~\cite{MR996653}. 
Pulling back the symplectic form on $\rgpd$, 
which Cattaneo-Felder described explicitly using Poisson sigma models 
\cite[Equation~(3.1), Theorem~3.3 and Section~4.3]{MR1938552},
via such an exponential map, one obtains
a symplectic form on an open neighborhood of the zero section
of the cotangent bundle  $T^*M$. 
One can then verify directly that this symplectic form 
coincides with the one obtained in \cite{MR2900786}
(see also \cite{MR2116732} for some related results).
\color{black}
By applying a combination of techniques developed
in the study of symplectic-Nijenhuis local groupoids
 \cite{MR2276462}  and the theory of Lie bialgebroids and Poisson groupoids
\cite{MR1262213, MR1746902}, we are able to  describe explicitly the
 two compatible symplectic structures on the local groupoid,
and thus obtain the following main result of the paper.

\begin{introthm}\label{thm:symplectic-realization-holomorphic}
Let $\holom{X}$ be a holomorphic Poisson manifold with underlying real smooth manifold $X$, almost
  complex structure $J$, and holomorphic Poisson bivector field
 $\pi\in \Gamma (\wedge^2 T^{1, 0} X)$. 
 Choose an affine connection $\nabla$ on $\smooth{X}$.
 Let $\xi \in  \XX(T^*X)$  be the Poisson
 geodesic vector field of $\nabla$ as 
in Example \ref{ex:brussel}.
 Denote by $\varphi^\xi_t$ the flow of $\xi$ on $T^*\smooth{X}$,
 and $\omegacan$ the canonical symplectic form on $T^*\smooth{X}$.
 The following then holds.
\begin{itemize}
  \item [(i)]
 There is an open neighborhood $\smooth{Y} \subset T^*\smooth{X}$ of the zero section such that  $\uomega_R$ and
 $\uomega_I\in \Omega^2(\smooth{Y})$ given, respectively,  by
\begin{align*}
  \uomega_I & = \int_0^1 (\varphi^\xi_t)^*\omegacan \,dt, \ \ \mbox{and} \\
  \uomega_R & = -\int_0^1 \left(\transpose J\smalcirc 
\varphi^\xi_t\right)^*\omegacan \,dt
\end{align*}
are well-defined  symplectic forms, and the $(1,1)$-tensor
$$\uJ=(\uomega_R\bemol)^{-1} \smalcirc \uomega_I\bemol: TY\to TY$$
is an integrable almost complex structure on $Y$. In particular, $Y$ endowed with $\uJ$ defines a complex manifold $\holom{Y}$.
 \item [(ii)] The 2-form $\uomega\in \Omega^2(Y)\otimes \CC$ defined by
  $$ \uomega := \frac{1}{4}\left(\uomega_R - i \, \uomega_I\right)$$
is holomorphic symplectic on $\holom{Y}$ and the natural projection
$\pr|_Y: \holom{Y} \to \holom{X}$ is a holomorphic symplectic realization
of   $(\holom{X}, \pi)$.
\item  [(iii)] The zero section is a Lagrangian submanifold of
$(\holom{Y}, \uomega)$.
\end{itemize}
Moreover, different choices of  the affine connection
$\nabla$ give rise to isomorphic holomorphic symplectic realizations.
\end{introthm}

Note that if the Poisson structure is trivial (i.e.
$\pi=0$),
 then $( \holom{Y}, \uomega)$ reduces to the canonical holomorphic 
symplectic manifold  $T^*\holom{X}$. Therefore,
 the holomorphic symplectic manifold 
$(\holom{Y}, \uomega)$ can be considered as
 a  deformation of the canonical holomorphic symplectic manifold 
$T^*\holom{X}$ parameterized by the  holomorphic Poisson structure
$\pi$. It would be interesting to investigate how   our result
is related to Kodaira theory of deformation of complex structures \cite{MR2109686}.

The present paper was influenced in large measure by 
Petalidou's splendid work \cite{2015arXiv150107830P} 
on symplectic realizations of non-degenerate Poisson--Nijenhuis manifolds. Making use of the computational approach of \cite{MR2900786}, Petalidou discovered 
an explicit  expression for the 2-forms on 
the symplectic realization.
However, her proof of their compatibility is, to the best of our
 understanding, not entirely sound. 
In our approach, which is more conceptual, 
tracing the hidden underlying groupoid structures
reveals crucial for proving  the compatibility.

Finally, we would like to point out that our approach
draws from various integration results valid only in the 
context of smooth manifolds. It is not clear whether this method
will be of any use in the  context of  algebraic varieties.
So the analogue of Problem~\ref{problem-b} 
for algebraic Poisson varieties remains open.

\section*{Acknowledgements}
We wish to thank  Alberto Cattaneo, Camille Laurent-Gengoux,
Joana Margarida Nunes da Costa, Fani Petalidou, Mathieu Sti\'enon, 
 Izu Vaisman and Alan Weinstein for inspiring discussions and comments.

\section{Holomorphic Poisson Manifolds and Symplectic Realizations}

In this section, we briefly recall, for the sake of completeness, some standard definitions on holomorphic Poisson structures. As most of those elementary notions closely parallel the real smooth Poisson case, we simply point the reader to the appropriate references for further details.

In what follows, let $\holom{X}$ be a complex manifold and $X$ its underlying real manifold. We will denote the structure sheaf of $\holom{X}$ by $\cO_\holom{X}$.
Recall that a  complex structure on $\holom{X}$ is equivalent to
an integrable almost complex structure $J$ on $X$,
i.e. an endomorphism  $J:TX \to TX$
of the underlying  real tangent bundle $TX$
with $J^2=-1$ and  with the vanishing Nijenhuis torsion.
Furthermore, the holomorphic tangent bundle $T\holom{X}$ is
 isomorphic (as a complex vector bundle) to  $T^{1,0}X \subset  TX\otimes \mathbb{C}$.

\begin{definition}\label{dfn:holompoisson}
By   a \emph{holomorphic Poisson structure} on
a complex manifold  $\holom{X}$,
we mean that its structure sheaf 
 $\cO_{\holom{X}}$ is endowed with a bracket
  $$ \{ \cdot, \cdot \}_U : \cO_{\holom{X}}(U) \times \cO_{\holom{X}}(U) 
\to \cO_{\holom{X}}(U), \ \  \  \forall \  U\subset \holom{X} $$
  such that $(\cO_{\holom{X}}, \{\cdot, \cdot\})$ is a sheaf
 of Poisson algebras.
\end{definition}

A \emph{holomorphic Poisson manifold} is a  complex manifold  $\holom{X}$
endowed with a holomorphic Poisson structure.
  As in the smooth case, Definition \ref{dfn:holompoisson} is equivalent to
 a holomorphic Poisson   bivector field on $\holom{X}$.

\begin{proposition}[\cite{MR2439547,MR2545872}]\label{prop:holompoissonbivector}
 Let $\holom{X}$ be a complex manifold with a holomorphic Poisson structure
 $\{\cdot, \cdot\}$. There is a unique bivector field
 $\pi \in \Gamma(\wedge^2 T^{1,0}X)$ satisfying
 \begin{equation}\label{eqn:holompoissonbivector}
   \bar\partial \pi=0 \quad \text{and} \quad [\pi,\pi]=0
 \end{equation}
such that for any open subset $U \subset X$
 and any   $f, g \in \cO_\holom{X}(U)$,
 $$ \{f, g\}_U=\langle \pi, \partial f \wedge\partial  g \rangle.$$

Conversely, any bivector field $\pi \in \Gamma(\wedge^2 T^{1,0} X)$
 satisfying (\ref{eqn:holompoissonbivector})
 defines a unique holomorphic Poisson structure on $\holom{X}$.
\end{proposition}

In particular, $\pi$ is  called a \emph{holomorphic Poisson bivector field}
 on $\holom{X}$ and
 $(\holom{X}, \pi)$ a holomorphic Poisson manifold. 
Note that $\pi$ induces a morphism of holomorphic vector bundles
$\pi^{\#} : T^* \holom{X} \to T \holom{X}$.

The next lemma, which connects holomorphic Poisson structures on $\holom{X}$ with Poisson-Nijenhuis structures on $X$ (see Appendix \ref{app:pn-manifolds}), will be needed in the proof of a slightly more general version of Theorem \ref{thm:symplectic-realization-holomorphic}. 

\begin{lemma}[\cite{MR2439547}]\label{lem:hp-is-pn}
 Let $\holom{X}$ be a complex manifold with almost complex structure $J$.
 Assume that $\pi= \pi_R + i\pi_I \in \Gamma(\wedge^2 T^{1,0}X)$,
where  $\pi_R$ and $\pi_I \in \Gamma(\wedge^2 T X)$.
Then $\pi$ is a holomorphic Poisson tensor if and only if
  \begin{itemize}
  \item[(i)] $(\pi_I, J)$ defines a Poisson--Nijenhuis structure on $X$, and
  \item[(ii)] $\pi^\sharp_R = \pi_I^\sharp \smalcirc \transpose{J}:\ \ T^*X\to
 TX$, where $\transpose{J}: T^*X\to T^*X$ denotes the dual of $J$. 
  \end{itemize}
\end{lemma}
%

  A complex manifold $\holom{X}$ endowed with a holomorphic Poisson bivector field $\pi \in \Gamma(\wedge^2 T\holom{X})$ is called \emph{holomorphic symplectic} if the associated morphism $\pi^\# : T^*\holom{X} \to T\holom{X}$ is invertible. In that case, we also say  that $\pi$ is \emph{non-degenerate}.

\begin{remark}
  If $\pi$ is a non-degenerate holomorphic Poisson bivector field,
for any $k > 0$, 
 $\pi^\#$ extends to an isomorphism
 $$\wedge^k \pi^\# : \wedge^k T^*\holom{X} \to \wedge^k T\holom{X},  
$$
 of holomorphic vector bundles. 
Then  $\omega = (\wedge^2 \pi^{\#})^{-1}(\pi)$ is   a
holomorphic symplectic $2$-form.
\end{remark}

Assume $(\holom{X}, \pi_\holom{X})$ and $(\holom{Y}, \pi_\holom{Y})$ are two holomorphic Poisson manifolds. A
 holomorphic map $f : \holom{X} \to \holom{Y}$ is said to be \emph{Poisson} if the pushforward $f_*(\pi_\holom{X})$ is well defined and
$f_*(\pi_\holom{X}) = \pi_\holom{Y}.$

\begin{definition}
 Let $\holom{X}$ be a holomorphic Poisson manifold.
 A \emph{holomorphic symplectic realization} of $\holom{X}$ is a holomorphic
 symplectic manifold $\holom{Y}$ together with a holomorphic map 
$q : \holom{Y} \to \holom{X}$ such that:
 \begin{itemize}
 \item[1)]  $q : \holom{Y} \to \holom{X}$ is a surjective submersion, and
 \item[2)] $q$ is a Poisson map.
 \end{itemize}
\end{definition}

\begin{example}
 Let $\holom{X}$ be a complex manifold. Let $\pi = 0$ 
be the zero bivector field  on $\holom{X}$.
 Then $\pi$ is a Poisson bivector field and $(\holom{X}, \pi)$ is
 a holomorphic Poisson manifold. The holomorphic
 cotangent bundle $T^*\holom{X}$, endowed with the canonical symplectic structure and the natural projection map $q : T^*\holom{X} \to \holom{X}$ gives a holomorphic symplectic realization of $\holom{X}$.
\end{example}

\begin{example}
Let $\frakg$ be a  finite dimensional complex Lie algebra. Its
complex dual $\frakg^*$ admits a canonical
linear holomorphic Poisson structure, called
{\em Lie-Poisson structure}.
Let  $G$ be a complex Lie  group with Lie algebra $\frakg$.
 Then $G$ is a complex manifold, and
 $T^*G$, equipped with the canonical holomorphic symplectic
structure and the  left translation $q : T^*G\to T^*_{e}G\cong \frakg^*$,
 defines a holomorphic symplectic realization of $\frakg^*$.
\end{example}

\section{Symplectic Local Groupoids: The Cattaneo-Felder Construction}

\renewcommand{\smooth}{\mathcal{C}^\infty}

Let $M$ be a real smooth manifold endowed with a Poisson bivector field 
$\pi\in\Gamma(\wedge^2 TM)$. In this section, we recall an
 explicit construction, 
due to Cattaneo--Felder \cite{MR1938552}, for the symplectic local groupoid 
associated with $(M,\pi)$.
The fundamental idea is to construct it as 
a quotient of the space of all paths of a certain type
in the cotangent Lie algebroid of $M$.
Cattaneo--Felder refer to these paths as 
\emph{solutions of the constraint equation (``Gauss law'')}
\cite[Equation~(3.2)]{MR1938552}\footnote{ 
This construction was subsequently extended to arbitrary Lie algebroids 
in~\cite{MR1973056} and the paths became known as $A$-paths. 
Note that in~\cite{MR1938552}, Cattaneo--Felder did not use the 
terminology `local symplectic groupoids.' 
However, \cite[Theorem 4.7]{MR1938552} essentially gave an explicit 
construction of the \emph{local} symplectic groupoid
since \cite[Assumtion  4.6]{MR1938552} is always satisfied 
in a neighborhood of the unit space $M$.}.
These paths characterize those whose values under the momentum map 
of an infinite dimensional Hamiltonian action vanish.
\color{black}
 The construction can be conceptually separated in 
two parts. The first is valid for an arbitrary Lie algebroid and constructs a local Lie groupoid out of a  Lie algebroid (Theorem \ref{thm:local-groupoid}). The second explicitly deals with the symplectic structure by inducing a symplectic form on the local groupoid constructed in the first part (Theorem \ref{thm:cattaneo-felder}).

Let $I = [0, 1]$ be the closed unit interval, and $n$ the dimension of $M$.
 For a smooth vector bundle $E \to M$ of rank $k$,
  consider the space $\tP^p(E) = \mathcal{C}^p(I, E)$ of $C^p$-paths
 valued in $E$. It can be endowed with the structure of a
smooth Banach manifold \cite{MR772023} by choosing a trivializing $\mathcal{C}^\infty$-atlas
 $(\varphi_i : E|_{U_i} \to \RR^n \times \RR^k)_{i \in J}$ for $E$ and defining a family $(\tilde{\varphi_{i}})_{i \in J}$ by
\begin{align*}
  \tilde{\varphi_{i}} : & \; \mathcal{C}^p (I, E|_{U_i}) \to
\mathcal{C}^p (I, \RR^n \times \RR^k) :  f \mapsto \varphi_{i} \smalcirc f.
\end{align*}
It is easily checked that the change of charts $\tilde\varphi_i \circ \tilde\varphi_j^{-1}$ are indefinitely  Frechet-differentiable with respect to the 
$\mathcal{C}^p$-norm on $\mathcal{C}^p(I, \mathbb{R}^{n+e})$,
 and therefore the family $\{\tilde\varphi_i\}$ induces an atlas for paths
 that fit in a single trivializing local chart for $E$. It is straightforward to extend it to
 an atlas for \emph{all} paths and thus $\tP^p(E)$ is an infinite dimensional
 smooth (i.e. a $C^{\infty}$-)
 Banach manifold.
\color{black}

Let $A$ be a Lie algebroid over $M$ with projection $p : A \to M$ and anchor $\rho: A\to TM$. 
In what follows,
 we will mostly be concerned with the space $\tP^1(A)$ of
 $\mathcal{C}^1$-paths valued in $A$. We will abbreviate the notation by letting $\tP(A) = \tP^1(A)$. Recall that an element $a :I \to A$ 
in  $\tP(A)$ is called an {\em $A$-path}
 if \begin{equation}\label{eq:Apath}
\rho \big( a(t) \big) = \frac{\diff \gamma (t)}{\diff t},
\end{equation}
where $\gamma (t)=(p \smalcirc a)(t)$ is the base path. We will denote by
$\P(A)$ the set of all $A$-paths. It is easy to see that $P(A)$ is a closed infinite dimensional Banach submanifold of $\tP (A)$. 

In a way that closely parallels the case of finite dimensional manifolds, one can define  \cite{MR772023} the tangent bundle $T\tP(A)$ of $\tP(A)$ as a certain collection of derivations. However, for what we will need, it is enough to
 recall that 
 there exists a natural isomorphism 
$$\tau : T\tP(A) \to \tP(TA)$$
 of the tangent bundle of $\tP(A)$ with $\mathcal{C}^1$-paths valued in $TA$. Explicitly, for a given $v\in T\tP(A)$, choose a path $\theta : I \to \tP(A)$ such that $v = \left.\frac{d}{ds}\right|_{s=0}\theta_s$. Then

\begin{equation}\label{eq:iota-map}
  (\tau v)(t) \equiv \left.\frac{d}{ds}\right|_{s=0} 
\left(\theta_s(t)\right) \in T_{\theta_0(t)}A.
\end{equation}
Fibrewise, $\tau$ then gives an isomorphism
$$\tau : T_a \tP(A) \to \{ X \in \tP(TA) \mid X(t) \in T_{a(t)}A \}$$
for all $a : I \to A$ in $\tP(A)$.


Now let $\rgpd\toto M$ be a local Lie groupoid with Lie algebroid $A$, source 
and target maps $\alpha,\beta : \rgpd \to M$, and unit map $\epsilon : M \to \rgpd$.
 Let $\exp : \Gamma(A) \to \text{Bis}(\rgpd\toto M )$ be the usual 
exponential map, where $\text{Bis}(\rgpd\toto M)$ is
 the set of local bisections 
of $\rgpd\toto M$ \cite{MR2157566}. Recall that 
$\text{Bis}(\rgpd\toto M)$ acts on $A$ by the differential
 of the  conjugation map. Let us denote this action by  
$$\Ad : \text{Bis}(\rgpd\toto M) \to \Aut(A).$$
Set
$$ \left.\had(X)\right|_{a_0} := \left.\frac{d}{dt}\right|_{t=0} \Ad_{\exp(tX)} (a_0) $$
for all $X\in \Gamma(A)$ and all $a_0\in A$. In particular,
 we have a map $\had : \Gamma(A) \to \XX(A)$,
 where $\XX(A)$ denotes the space of all vector fields on $A$.

It is well known \cite{MR1938552, MR1973056, MR2063018,  MR2911881} that
 $\rgpd\toto M$ can be reconstructed as a quotient of $P(A)$ by a certain integrable distribution $\cD(PA) \subset T \P(A)$.
More explicitly,  for  any $a\in P(A)$, denote
  $$ H_{a} := \left\{ \left[ t \mapsto \had(\xi(t))\vert_{a(t)} +
 \left.\frac{d\xi(t)}{dt}\right\vert_{\gamma(t)} \right] \in \tP(TA) \;\middle|\;
\forall  \xi : I\to \Gamma(A), \; \xi(0)=\xi(1)=0 \right\}, $$
where $\gamma(t)$ is the base path of $a(t)$, and 
$$\left.\frac{d\xi(t)}{dt}\right|_{\gamma(t)} \in A_{\gamma(t)}$$ 
is naturally identified with a  vertical tangent
 vector in $T_{a(t)}A$. Define,  $\forall a\in \P(A)$,
$$\cD_a(PA) := \tau^{-1}H_a,$$
where $\tau : T\tP(A) \to \tP(TA)$ is the  isomorphism 
  \eqref{eq:iota-map}. The most important
 facts we will need are summarized  in the following theorem. For
 details see  \cite{MR1973056}.

\begin{theorem}
\label{thm:local-groupoid}
The following statements hold.
\begin{itemize}
\item[(i).] $\cD(PA)$ is a finite codimensional integrable distribution on $P(A)$.
\item[(ii).] Let $\cF(A)$ be the foliation integrating $\cD(PA)$. Then there is an open neighborhood $\Pl(A)\subset \P(A)$ of the natural embedding of $M$ into $P(A)$ as constant paths, where the space of leaves 
  $$\bPl(A) := \Pl(A)/(\cF(A)\cap \Pl(A))$$ 
  is a finite dimensional smooth manifold.  By
\begin{equation}
\label{eq:q}
q: \Pl(A)\to \bPl(A)
\end{equation}
we denote the quotient map.

\item[(iii).]  The maps
  \begin{align*} & \alpha : \P(A) \rightarrow M : a \mapsto a(0), \\
                 &  \beta : \P(A) \rightarrow M : a \mapsto a(1), \\
                 &  \epsilon : M \rightarrow \P(A) : m \mapsto a(t) \equiv \mathbf{0}_{m},
  \end{align*}
    descend to smooth maps $\bar\alpha : \bPl(A) \to M, \bar\beta : \bPl(A) \to M$, and $\bar \epsilon : M \to \bPl(A)$ on the quotient.
 Furthermore, there is an open neighborhood of the constant 
diagonal embedding
 $$ M \hookrightarrow M \times M \hookrightarrow \Pl(A)
 \times_{\beta,M, \alpha} \Pl(A) $$
    where the concatenation operation on paths induces a well defined local multiplication
    $$\bar \mu :  \bPl(A) \times_{\bar\beta,M, \bar\alpha} \bPl(A) \to \bPl(A)$$
on $\bPl(A)$. Finally,
 with $\bar \mu$ as multiplication, and $\bar\alpha, \bar\beta$ and 
$\bar\epsilon$ as, respectively, source, target and unit maps,
 $\bPl(A) \toto M$ has the structure of a local Lie groupoid
 with Lie algebroid $A$.
    \end{itemize}
\end{theorem}

When $A$ is the cotangent Lie algebroid $(T^*M)_\pi$ of
a smooth Poisson manifold $(M, \pi)$, one obtains the following theorem.

\begin{theorem}[\cite{MR1938552, MR2128714}] \label{thm:cattaneo-felder} 
  Let $(M, \pi)$ be a Poisson manifold,
 and let $A$ denote its corresponding  cotangent
 Lie algebroid $\cotalg{M}$.  The following statements hold.
  \begin{itemize}
  \item[(i).] For all $a\in \tP(A)$,
 and all  $u,v \in T_a \tP(A)$, define
    \begin{equation}\label{eq:cattaneo-felder-form}
      \tomega_\can( u, v ) = \int_0^1 
      \omega_\can ((\tau u)(t), (\tau v)(t) ) dt.
    \end{equation}
Then $\tomega_\can$    is a  symplectic form on $\tP(A)$.
Here $\omega_\can \in \Omega^2 (T^*M)$ 
denotes the canonical symplectic form on $T^*M$.
  \item[(ii).] There exists a symplectic form $\bomega$
    on $\bPl(A)$, with which  the local groupoid $\bPl(A) \toto M$
 from Theorem \ref{thm:local-groupoid} (iii) 
becomes a symplectic local groupoid. 
Moreover, we have 
    \begin{equation}\label{eq:cattaneo-felder-reduction}
      q^* \bomega = \iota^* \tomega_\can,
    \end{equation}
    where $q: \Pl(A) \to \bPl(A)$ is  the quotient map, and
    $\iota: \Pl(A)\hookrightarrow \tP(A)$ is the natural inclusion.
  \end{itemize}
\end{theorem}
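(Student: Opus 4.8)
The plan is to prove the two parts separately, reading part (ii) as an infinite-dimensional coisotropic reduction whose reduced space is exactly the local groupoid produced in Theorem~\ref{thm:local-groupoid}.

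For part (i), I would first establish closedness by exhibiting $\tomega_\can$ as an exact form. Let $\theta_\can$ be the tautological one-form on $A=T^*M$, so $\omega_\can=-\diff\theta_\can$, and define the transgressed one-form $\tilde\theta_\can\in\Omega^1(\tP(A))$ by $\tilde\theta_\can(u)=\int_0^1\theta_\can\big((\tau u)(t)\big)\,\diff t$. A direct computation, differentiating under the integral sign, gives $\tomega_\can=-\diff\tilde\theta_\can$, so $\tomega_\can$ is closed. Weak non-degeneracy is a calculus-of-variations argument: if $\tomega_\can(u,v)=0$ for all $v\in T_a\tP(A)$, then, since $\tau v$ ranges over all paths in $TA$ lying over $a$ while $\omega_\can$ is fibrewise non-degenerate on $T(T^*M)$, the integrand $\omega_\can\big((\tau u)(t),\cdot\big)$ must vanish for almost every $t$; hence $(\tau u)(t)=0$ for all $t$ and $u=0$.

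For part (ii), the strategy is coisotropic reduction in the weakly symplectic Banach manifold $(\tP(A),\tomega_\can)$. I would show that $\iota:\P(A)\hookrightarrow\tP(A)$ realizes $\P(A)$ as a coisotropic submanifold, the characteristic (null) distribution of $\iota^*\tomega_\can$ being precisely $\cD(PA)$. Concretely, for a gauge vector $Z_\xi$ with $\tau$-image $t\mapsto\had(\xi(t))\vert_{a(t)}+\frac{\diff\xi(t)}{\diff t}\vert_{\gamma(t)}$ (where $\xi(0)=\xi(1)=0$), I would compute $\tomega_\can(Z_\xi,v)$ for arbitrary $v\in T_a\P(A)$, integrate by parts in $t$, and use the endpoint conditions together with the defining identity of $\had$ on the cotangent algebroid to show this integral vanishes; this gives $\cD(PA)\subseteq\ker(\iota^*\tomega_\can)$. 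For the reverse inclusion, a variational argument parallel to part (i)—testing a putative null vector against a sufficiently rich family of $v$ and invoking the fibrewise non-degeneracy of $\omega_\can$—forces any null vector to be of gauge type, so $\ker(\iota^*\tomega_\can)=\cD(PA)$. Closedness of $\tomega_\can$ and Cartan's formula then yield invariance of $\iota^*\tomega_\can$ along the leaves of $\cF(A)$, so it descends to a closed two-form $\bomega$ on the finite-dimensional leaf space $\bPl(A)$ with $q^*\bomega=\iota^*\tomega_\can$; since the kernel is exactly the leaf directions, $\bomega$ is non-degenerate, hence symplectic.

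It remains to upgrade $\bPl(A)\toto M$ to a symplectic local groupoid, i.e.\ to prove $\bomega$ is multiplicative. Realizing the multiplication $\bar\mu$ as concatenation of $A$-paths (Theorem~\ref{thm:local-groupoid}(iii)), I would verify at the level of $\tP(A)$ that the transgressed form is additive under concatenation, so that the graph of $\bar\mu$ is Lagrangian in $\big(\bPl(A)\times\bPl(A)\times\overline{\bPl(A)},\,\bomega\oplus\bomega\oplus(-\bomega)\big)$, and then descend this statement through $q$. The main obstacle is the reduction step itself: because $\tomega_\can$ is only weakly non-degenerate, the identification $\ker(\iota^*\tomega_\can)=\cD(PA)$ cannot be extracted from abstract symplectic-orthogonal formalism and must rest on the explicit integration-by-parts computation above, with careful bookkeeping of the endpoint terms that the conditions $\xi(0)=\xi(1)=0$ are designed to annihilate. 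Granting that identification, the passage to the finite-dimensional leaf space and the inherited symplectic and multiplicative structures follow from Theorem~\ref{thm:local-groupoid}.
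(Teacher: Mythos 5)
Your proposal is essentially the argument of the source the paper itself cites without reproducing: the paper states this theorem as a quoted result of Cattaneo--Felder \cite{MR1938552}, and their construction is precisely the reduction you outline — the transgressed exact (weakly) symplectic form on the path space, the identification of $\ker(\iota^*\tomega_\can)$ with the gauge distribution $\cD(PA)$ by integration by parts using the endpoint conditions $\xi(0)=\xi(1)=0$, and multiplicativity of $\bomega$ via additivity of the transgression under concatenation of $A$-paths. Your sketch is sound, including the two caveats you correctly flag: since $\tomega_\can$ is only weakly non-degenerate the kernel computation must be done explicitly rather than by abstract symplectic-orthogonal formalism, and the smooth structure on the leaf space together with the local groupoid structure must be imported from Theorem \ref{thm:local-groupoid}, exactly as the paper does.
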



\begin{remark}
Some historical remarks are in order.
Part~(i) of Theorem~\ref{thm:cattaneo-felder}
was proved in~\cite{MR1938552} --- see~\cite[Equation~(3.1), Theorem~3.3 and
Section~4.3]{MR1938552}. Part~(ii) was explicitly proved in~\cite{MR1938552} 
in the case of an integrable Poisson manifold --- see~\cite[Theorem 4.7]{MR1938552}.
However, by restricting to a neighborhood of the unit space, 
one can adapt the argument to prove the existence of a local symplectic groupoid 
integrating a given Poisson manifold, 
since \cite[Assumption 4.6]{MR1938552} holds automatically.
This was done in full details in~\cite{MR2128714}.
\end{remark}
\color{black}

Before we close this section, let us record the following proposition, which
 will  be needed later on. Its proof is straightforward and  follows
 immediately from the standard construction of  $\bPl(A)$.

\begin{proposition}\label{prop:paths-to-groupoid}
  Let $A$ and $B$ be  Lie algebroids over the same base manifold $M$,
 and let $\psi : A \to B$ be a Lie algebroid morphism over the identity map. 
\begin{itemize}
  \item[(i)] The induced map on path spaces
$$ \tP(\psi) : \tP(A) \to \tP(B) : [t\mapsto a(t)] \mapsto [t \mapsto \psi(a(t))]  $$
preserves $A$-paths,  and descends to a morphism of
local Lie groupoids $$\bar P(\psi) : \bPl(A)  \to \bPl(B)$$
 making the diagram
\begin{equation}\label{eq:paths-to-groupoid-diagram}
  \begin{gathered}
  \begin{tikzpicture}
    \matrix (m) [matrix of math nodes, row sep=3em, column sep=3em] {
      \Pl(A) & \Pl(B) \\
      \bPl(A) & \bPl(B) \\
    };
    \path[-stealth]
    (m-1-1) edge node [above] {$\tP(\psi)$} (m-1-2)
    (m-1-2) edge node [right] {$q'$} (m-2-2)
    (m-1-1) edge node [left] {$q$} (m-2-1)
    (m-2-1) edge node [below] {$\bar P(\psi)$} (m-2-2);
  \end{tikzpicture}
\end{gathered}
\end{equation}
commute.
Here $q$ and $q'$ are the respective quotient maps as in \eqref{eq:q}.
\item [(ii)] The diagram
  \begin{equation}\label{eq:paths-to-groupoid-tangent-diagram}
    \begin{gathered}
      \begin{tikzpicture}
        \matrix (m) [matrix of math nodes, row sep=3em, column sep=3em] {
          T \P(A) & T \P(B) \\
          \tP(TA) & \tP(TB) \\
        };
        \path[-stealth]
        (m-1-1) edge node [above] {$\tP(\psi)_*$} (m-1-2)
        (m-1-2) edge node [right] {$\tau \circ \iota_* $} (m-2-2)
        (m-1-1) edge node [left] {$\tau \circ \iota_*$} (m-2-1)
        (m-2-1) edge node [below] {$ \tP(\psi_*)$} (m-2-2);
      \end{tikzpicture}
    \end{gathered}
  \end{equation}
  commutes. Here, by abuse of notations,
$\iota$ denotes both  embeddings
$\P(A)\to \tP( A)$ and $\P(B)\to \tP( B)$, and  $\tP(\psi_*) :
 \tP(TA) \to \tP(TB)$ is the map induced, as in part (i), from the tangent map $\psi_* : TA \to TB$.
\end{itemize}
\end{proposition}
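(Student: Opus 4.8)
The plan is to verify the three assertions directly from the constructions recalled above, isolating the one genuinely nontrivial ingredient — the naturality of $\had$ — and settling it by replacing the groupoid-theoretic definition of $\had$ with an intrinsic one. I would begin with part (ii), since it is the mechanism that transports part (i) across the identification $\tau$.

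For (ii), I would take $v\in T_a\P(A)$, represent it by a curve $s\mapsto\theta_s$ of $A$-paths with $\theta_0=a$, and simply unwind the definition \eqref{eq:iota-map} of $\tau$ along both routes of \eqref{eq:paths-to-groupoid-tangent-diagram}. Going down and then across yields the $TB$-path $t\mapsto \psi_*\!\bigl(\tfrac{d}{ds}\big|_{s=0}\theta_s(t)\bigr)$, whereas going across and then down yields $t\mapsto \tfrac{d}{ds}\big|_{s=0}\psi(\theta_s(t))$. These agree by the chain rule, so the square commutes. This step is routine.

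For (i), the first point is that $P(\psi)$ preserves $A$-paths: as $\psi$ is a Lie algebroid morphism over $\mathrm{id}_M$ it fixes base points and satisfies $\rho_B\circ\psi=\rho_A$, so $\psi\circ a$ has the same base path $\gamma$ as $a$ and again obeys \eqref{eq:Apath}. The substantive point is that $P(\psi)$ descends to the leaf spaces, i.e.\ that $P(\psi)_*$ carries $\cD_a(PA)$ into $\cD_{\psi\circ a}(PB)$. Using (ii) to pass through $\tau$, and recalling that $P(\psi_*)$ is postcomposition with $\psi_*$, this reduces to showing that $\psi_*$ sends the model space $H_a$ into $H_{\psi\circ a}$. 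A generator of $H_a$ splits as $\had(\xi(t))|_{a(t)}$ plus the vertical term $\tfrac{d\xi(t)}{dt}|_{\gamma(t)}$; since $\psi_*$ acts on vertical vectors fiberwise as $\psi$, and $\psi$ is linear and fixes base points, the vertical term maps correctly to the generator built from $\psi\circ\xi$. Everything therefore hinges on the horizontal term, for which I would invoke the naturality identity
\[
\psi_*\circ\had(X)=\had(\psi\circ X)\circ\psi,\qquad X\in\Gamma(A).
\]

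The proof of this identity is the step I expect to be the main obstacle, because the definition of $\had$ given above runs through the exponential and the adjoint action of bisections of a local groupoid; one cannot appeal to an already-integrated groupoid morphism without circularity, as $\bP(\psi)$ is precisely what is being built. The clean way around this is to characterize $\had(X)$ intrinsically as the linear vector field on $A$ whose associated derivation of $\Gamma(A)$ is $[X,\cdot\,]$ and whose base field is $\rho(X)$. With this description the displayed identity is equivalent to the two conditions $\rho_B\circ\psi=\rho_A$ and $\psi([X,Y]_A)=[\psi\circ X,\psi\circ Y]_B$, which are exactly the defining properties of a Lie algebroid morphism over the identity. Granting the identity, the inclusion $\psi_* H_a\subset H_{\psi\circ a}$ follows, hence $P(\psi)$ descends to $\bP(\psi)$, and \eqref{eq:paths-to-groupoid-diagram} commutes by construction, $\bP(\psi)\circ q=q'\circ P(\psi)$. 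Finally $\bP(\psi)$ is a morphism of local Lie groupoids because, under the description of Theorem \ref{thm:local-groupoid}(iii), source and target are endpoint evaluations, the unit is the constant zero path, and multiplication is concatenation — all of which $P(\psi)$ intertwines, $\psi$ being linear, pointwise, and fixing the zero section.
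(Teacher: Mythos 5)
Your proposal is correct, and it supplies in full the details that the paper itself omits: the paper gives no proof of this proposition, stating only that it ``follows immediately from standard discussions on $\bP(A)$'' and citing the literature. Your fleshed-out argument follows exactly the route those references take. Part (ii) is, as you say, just the chain rule applied to the definition of $\tau$ in \eq{eq:iota-map}. For part (i) you correctly isolate the only substantive point, namely that $\psi_*$ maps $H_a$ into $H_{\psi\circ a}$, and within that the only nontrivial ingredient, the naturality $\psi_*\circ\had(X)=\had(\psi\circ X)\circ\psi$. Your concern about circularity is well placed --- one cannot intertwine the adjoint actions through an integrated morphism that is precisely the object under construction --- and your resolution is the standard one: characterize $\had(X)$ intrinsically as the linear vector field on $A$ covering $\rho(X)$ whose associated derivation of $\Gamma(A)$ is $[X,\cdot\,]$, whereupon the naturality identity becomes a restatement of the anchor and bracket conditions defining a Lie algebroid morphism over $\mathrm{id}_M$. (A linear vector field is determined by its base field together with its derivation of sections, and two such are $\psi$-related exactly when the base fields agree and the derivations intertwine, so the equivalence you assert is genuine.) The one point you pass over silently is that $P(\psi)$ need only map $\Pl(A)$ into $\Pl(B)$ after a possible shrinking of these neighborhoods of the zero-path embedding; this is harmless and is glossed over in the paper as well. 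The concluding verification that source, target, unit and concatenation are intertwined at the level of paths is correct and completes the argument.
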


\section{Exponential Maps}

In Lie theory, the classical exponential map establishes 
a local diffeomorphism  from an
  open neighborhood of zero in a Lie algebra
to the corresponding local Lie group.
This construction extends to Lie algebroids and local Lie groupoids. 
Unlike the Lie algebra case, however, one needs to choose 
some geometrical structure, namely an $A$-connection on $A$. 
In this section, we recall some basic facts about the exponential map for
 Lie groupoids, and  describe the latter  explicitly   in
 the case of the local Lie groupoid of Theorem \ref{thm:local-groupoid} (iii).

Let $A$ be, as before, a Lie algebroid over $M$. By an $A$-connection on $A$ we mean an $\mathbb{R}$-bilinear map 
$$\nabla : \Gamma(A) \times \Gamma(A) \to \Gamma(A): (X,Y) \mapsto \nabla_X Y$$ 
satisfying the  conditions
\begin{align*}
  &\nabla_{f X} Y = f \nabla_X Y, \quad    \text{and} \\
  &\nabla_X (fY)  = (\rho(X)f) Y + f \nabla_X Y,
\end{align*}
for all $X,Y\in \Gamma(A)$ and $f\in \mathcal{C}^\infty(M)$.

\begin{example}
Any linear connection   $\tilde\nabla$ on the 
vector bundle  $A$  induces
 an associated $A$-connection on $A$ by the formula $\nabla_X Y =\tilde\nabla_{\rho (X)} Y$.
However, not every $A$-connection on $A$ is of this form.
\end{example}

\begin{definition}
  An \emph{$A$-geodesic} (or a \emph{geodesic $A$-path}) is an $A$-path $a : I \to A$ satisfying the geodesic equation:
  $$\nabla_{a(t)} a(t)=0$$
  for any $t \in I$.
\end{definition}

An $A$-connection on $A$ also defines a map $h : A\times_M A \to TA$, called a \emph{horizontal lifting} \cite{2014arXiv1408.2903L}:
$$ h(a,b) = \bar b_* ( \rho(a) ) - \tau_b\big ( (\nabla_a \bar b)|_x\big)
 \in T_b A, $$
for any  $x\in M$ and $a, b\in A_x$. Here
$ \bar b \in \Gamma(A)$ is any section   satisfying
 $\bar b( x ) = b$, and $\tau_b$
denotes the canonical linear isomorphism between the fiber $A_x$
and the vertical tangent space of $A$ at the point $b$.
 It is not hard to check that
 $h(a,b)$ does not depend on the choice of the extension  $\bar b$.

\begin{definition}\label{def:geodesic-vector-field}
  The \emph{geodesic vector field of $\nabla$} is the vector field $\xi \in \XX(A)$ defined by
  $$\xi_a = h(a,a)$$
  for any $a \in A$.
\end{definition}

In what follows, for a  given  $A$-connection $\nabla$ on $A$, we will denote by $\varphi^\nabla_t$ the flow of its geodesic vector field.

\begin{proposition}\label{prop:algebroid-exponential}
    Let $A$ be a Lie algebroid, and $\nabla$ an $A$-connection on $A$. The following holds.
  \begin{itemize}
  \item [(i).] There is a neighborhood $U\subset A$ of the zero section such that $\varphi_t^\nabla$ is defined for all $t\in I$ and,
  \item [(ii).] for all $a_0 \in U$, the path $[t\in I \mapsto a(t) = \varphi_t^\axi (a_0)]$
    is $A$-geodesic.
  \end{itemize}
\end{proposition}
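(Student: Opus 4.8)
The plan is to analyze the geodesic vector field $\xi$ as a genuine second-order equation on $A$ and then extract the two claims from standard ODE theory together with the defining property of $h$. First I would unwind the definition of $\xi$. For $a \in A_x$, write $\xi(a) = h(a,a) = \bar a_*(\rho(a)) - \nabla_a \bar a$, where $\bar a$ is any local section with $\bar a(x) = a$. The key structural fact I want to establish is that $\xi$ is a \emph{spray}-like vector field: the projection $p_* \xi(a) = \rho(a)$, which says integral curves project to honest paths on $M$ whose velocity is the anchor of the $A$-path, i.e. any integral curve of $\xi$ automatically satisfies the $A$-path condition \eqref{eq:Apath}. I would verify this directly from the formula, noting that $p_* \bar a_*(\rho(a)) = \rho(a)$ and $p_* \nabla_a \bar a = 0$ since $\nabla_a \bar a$ is vertical.

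Next I would prove part (ii), which is really the heart of the identification. Let $a(t) = \varphi_t^\nabla(a_0)$ be an integral curve of $\xi$, so $\tfrac{d}{dt} a(t) = \xi(a(t)) = h(a(t), a(t))$. By the spray property just established, the base path $\gamma(t) = p(a(t))$ satisfies $\dot\gamma(t) = \rho(a(t))$, so $a$ is an $A$-path. To see it is $A$-geodesic I must show $\nabla_{a(t)} a(t) = 0$. Here I would use the decomposition of $\dot a(t) \in T_{a(t)}A$ into horizontal and vertical parts induced by $\nabla$: the horizontal lift of $\rho(a(t))$ is by definition $h(a(t), a(t))$ plus a correction, and comparing $\dot a(t) = h(a(t),a(t))$ against the general expression $\dot a(t) = (\text{horizontal lift of }\dot\gamma) + (\text{vertical part} = \tfrac{D a}{dt})$ shows that the covariant derivative $\nabla_{a(t)} a(t) = \tfrac{Da}{dt}$ vanishes. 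Concretely, writing $a$ in terms of a local section $\bar a$ through the formula for $h$ and matching the vertical components yields $\nabla_{a(t)} a(t) = 0$, which is exactly the $A$-geodesic equation.

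For part (i), I would invoke the standard existence theorem for flows of smooth vector fields on a manifold, applied to $\xi \in \XX(A)$. Since $\xi$ is smooth and vanishes on the zero section (because $h(\mathbf{0}_x, \mathbf{0}_x) = 0$, as both terms in the defining formula vanish when $a = \mathbf{0}_x$), the zero section consists of fixed points, and the flow $\varphi_t^\nabla$ is defined for all $t$ there. By continuity of the flow domain and a compactness argument on the compact interval $I = [0,1]$, there is an open neighborhood $U$ of the zero section on which $\varphi_t^\nabla$ is defined for all $t \in I$; this is the usual ``escape lemma / uniform time of existence near a set of equilibria'' packaged for the unit interval.

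The main obstacle I anticipate is part (ii): making the identification $\nabla_{a(t)} a(t) = \tfrac{Da}{dt}$ fully rigorous, since it requires carefully relating the horizontal lift map $h$, the vertical identification $T_{a(t)}^{\mathrm{vert}} A \cong A_{\gamma(t)}$, and the intrinsic covariant derivative of an $A$-path along itself. The subtlety is that $\nabla_{a(t)} a(t)$ is not literally defined until one fixes the interpretation of $a(t)$ as a time-dependent section along $\gamma$, so I would need to check that the expression $h(a,a) = \bar a_*(\rho(a)) - \nabla_a \bar a$ is genuinely independent of the chosen extension $\bar a$ (as the excerpt asserts but I would re-verify) and that the vertical part of an integral curve's velocity computes precisely the covariant derivative. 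Everything else reduces to the smoothness of $\xi$ and elementary ODE theory.
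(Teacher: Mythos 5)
Your proposal is correct, and for part (ii) it follows essentially the same route as the paper: the paper also first checks $p_*\xi(a)=\rho(a)$ directly from the formula for $h$, and then makes the ``vertical part of $\dot a(t)$ equals $\nabla_{a(t)}a(t)$'' identification rigorous exactly in the way you anticipate, namely by choosing a time-dependent section $\bar a(t,\cdot)$ with $\bar a(t,\gamma(t))=a(t)$ and splitting $\nabla_{a(t)}a(t)=\frac{\partial}{\partial t}\bar a(t,\gamma(t))+\nabla_{a(t)}\bar a(t,\gamma(t))=\dot a(t)-\xi(a(t))=0$. The subtlety you flag is real but is resolved precisely by this computation, so you should carry it out rather than leave it as a decomposition into ``horizontal'' and ``vertical'' parts (note that $h(a,\cdot)$ only lifts vectors in the image of the anchor, so it does not define a full horizontal distribution in general; the time-dependent-extension computation sidesteps this). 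Where you genuinely diverge is part (i): the paper exploits the homogeneity of the geodesic vector field, $s\xi_a=(m_s)^{-1}_*\xi_{sa}$, to get the spray identity $s\varphi^\nabla_{ts}(a)=\varphi^\nabla_t(sa)$ and then rescales a short-time existence neighborhood into a time-one neighborhood of the zero section, whereas you observe that $\xi$ vanishes on the zero section and apply openness of the flow domain together with the tube lemma over the compact interval $I$. Both arguments are valid; yours is more elementary and self-contained, while the paper's rescaling identity is the quantitatively sharper statement (it is the same homogeneity that underlies the exponential map $\exp^\nabla(ta_0)$ used later), so the paper gets a reusable formula out of its proof of (i) that your version does not provide.
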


\begin{proof}
  \begin{itemize} 
  \item [(i)] Denote by $m_s : A \to A$ the fibrewise
scalar multiplication by $s\in \RR$. Let $\xi\in \XX(A)$ be the geodesic vector field of $\nabla$.
It is easily checked that $ s\xi_a = (m_s)^{-1}_* \xi_{sa}$ for 
all $s>0$ and all $a\in A$.
It then follows that $$ s\varphi_{ts}^\nabla(a) = \varphi_t^\nabla(sa), $$
    where one side is defined exactly when the other is. Rescaling
 locally,  this yields the claim.
  \item[(ii)] Fix  any $a_0 \in U \subset  A $,
 and let $a(t) = \varphi^\axi_t(a_0)$. Denote by $\gamma(t) = p(a(t))$ the underlying base path. We have
    \begin{align*}
      p_*(\dot a(t)) = p_*(\xi(a(t))) = p_*(h(a(t), a(t))) = \rho(a(t)).
    \end{align*}
    Hence $a(t)$ is  indeed  an $A$-path. Choose any time-dependent section $\bar a : I \times M \to A$ such that $\bar a(t, p(a(t))) = a(t)$. Then 
    \begin{align*}
      \nabla_{a(t)} a(t) & = \frac{\partial}{\partial t} \bar a(t, \gamma(t)) + \nabla_{a(t)} \bar a (t, \gamma(t)), \\
                         & = \left[ \dot a (t) - \bar a_*^t(\dot \gamma(t))\right] + \nabla_{a(t)} \bar a (t, \gamma(t)) \\
                         & = \dot a (t) - \xi(a(t)) \\
                         & = 0.
    \end{align*}
    Thus the conclusion follows.\qedhere
\end{itemize}
\end{proof}

\begin{example}
\label{ex:brussel}
  Let $(M,\pi)$ be a Poisson manifold, 
and $(T^*M)_\pi$ its cotangent Lie algebroid.
Choose  an affine connection   $\nabla^{TM} : \XX(M)\times\XX(M)\to \XX(M)$ 
 on $M$. Let $\nabla^{T^*M}: \XX(M) \times \Gamma(T^*M)\to \Gamma(T^*M)$ be
 the corresponding linear connection on $T^*M$--the dual bundle of
$TM$.
Introduce a Lie algebroid $(T^*M)_\pi$-connection
$\nabla : \Gamma(T^*M) \times \Gamma(T^*M) \to \Gamma(T^*M)$
on $(T^*M)_\pi$ by
 $$\nabla_\lambda \nu = \nabla^{T^*M}_{\pi^\sharp(\lambda)} \  \nu ,
 \quad\quad \forall \lambda,\nu\in \Gamma(T^*M).$$
  In local coordinates $\{q^i\}$ on $M$,  assume that
  $$ \nabla_{\frac{\partial}{\partial q^i}}^{TM} \,
\frac{\partial}{\partial q^j} = \sum_k \Gamma^k_{ij}\frac{\partial}{\partial q^k} \;\text{ and }\; \pi =
\sum_{ij} \pi^{ij} \frac{\partial}{\partial q^i}\wedge \frac{\partial}{\partial q^j}. $$
  Then the corresponding
 geodesic vector field $\xi\in\XX(T^*M)$  has the local expression:
  $$ \xi = \sum_{ij} p_i \pi^{ij} \frac{\partial}{\partial q^j} +
\sum_{ijkl} p_k p_l \pi^{ki}\Gamma_{ij}^l\frac{\partial}{\partial p_j}, $$
  where $\{q^i,p_i\}$ are the induced local coordinates on $T^*M$.
We call $\xi$ the {\em Poisson geodesic vector field} of $\nabla^{TM}$
(it was called Poisson spray in \cite{MR2900786}).
\end{example}

Let $\nabla$ be an $A$-connection on $A$,
 and  $\rgpd \toto M$  
a  local Lie groupoid integrating $A$ with source and target maps
 $\alpha$ and $\beta$, respectively. For any $x\in M$,
 there is an affine connection on the source fiber $\rgpd_x=\alpha^{-1} (x)$, which we will denote by $\bar \nabla^x$. It is defined 
\cite{MR1687747} uniquely by
\begin{equation}\label{eq:nablabar}
  \left.\bar\nabla^x\right._{X^L|_{\alpha^{-1} (x)}} ( Y^L|_{\alpha^{-1} (x)}) = (\left.\nabla \right._X Y)^L|_{\alpha^{-1} (x)},
\end{equation}
%
for any  $X,Y\in \Gamma(A)$. Here $X^L$ denotes the left-invariant
 vector field on $\rgpd$ associated to $X$.

\begin{definition}[\cite{MR1687747}]\label{def:groupoid-exponential}
  Let $\nabla$ be an $A$-connection on $A$,
and $\rgpd\toto M$ a local Lie groupoid with Lie algebroid $A$.
 The \emph{groupoid exponential map}  defined by $\nabla$ is the map
 $\exp^\nabla : A \to \rgpd$, defined on a neighborhood of the zero section
 in $A$, and which, on each fiber $A_x$,
 is given by the ordinary exponential map of
 the affine connection $\bar\nabla^{x}$ on $\rgpd_x$. 
\end{definition}

When no risk of ambiguity exists, we shall simply denote
 ``$\exp^\nabla$'' by ``$\exp$'', hiding the dependency on the 
$A$-connection $\nabla$.

It can be proved that $\exp$ is smooth \cite{MR1687747}. Also
 note that, by definition, $\alpha \circ \exp = p$,
 where $p: A \to M$ is the base point projection. In particular, for any $a_0\in A$, the exponential path 
$t\mapsto \exp(ta_0)$
is a source-path in $\rgpd$.

Letting $U \subset A$ as in Proposition \ref{prop:algebroid-exponential} (i), we define
\begin{equation}\label{eq:phi-map}
  \Phi : U \to \P(A) : a_0\in U \mapsto [t\in I \mapsto \varphi^\axi_t(a_0)],
\end{equation}
i.e.\ $\Phi(a_0)$ is the $A$-geodesic stemming from $a_0$. One should think of $\Phi$ as a kind of exponential map
 at the level of A-paths \cite{MR1973056}.
 Formally, the relation between $\Phi$ and the groupoid exponential
map  of Definition \ref{def:groupoid-exponential} is summarized
 in Proposition \ref{thm:exp-local-diffeo}. Its proof is a consequence of the
 following simple lemma,
 which relates, for a given Lie algebroid element $a_0\in A$, the groupoid exponential path $\exp(ta_0)$ to the $A$-geodesic $\varphi^\nabla_t(a_0)$ stemming from $a_0$.

\begin{lemma}\label{prop:groupoid-exponential}
  Let $U\subset A$ be as in Proposition \ref{prop:algebroid-exponential} (i) and 
fix any $a_0 \in U$. Let
 $a = \Phi(a_0)$ be the associated geodesic $A$-path, i.e.\ $a(t) = \varphi^\axi_t(a_0)$. Also let 
$ r(t) = \exp(t a_0)$ 
be the associated exponential path in $ \rgpd$. Then $r(t)$
is a source-path that satisfies the conditions:
 $$ 
 \begin{cases}
 [L_{r^{-1}(t)}]_*\dot r(t)  = a(t) \quad \forall t\in I, \text{and} \\
 r(0) = \epsilon(p(a_0)), \; \dot r(0) = a_0. 
\end{cases}
$$
 Here, for any
 $g\in \rgpd$, the map $L_g : \rgpd_{\beta(g)} \to \rgpd_{\alpha(g)}$
 denotes the  left multiplication by $g$.
\end{lemma}
\begin{proof}
  From \eq{eq:nablabar}, it follows that
  \begin{align*}
    0 & = \left.\bar\nabla^x\right._{\dot r(t)} \dot r(t) = [L_{r(t)}]_*\left( \nabla_{[L_{r^{-1}(t)}]_* \dot r(t)}\;[L_{r^{-1}(t)}]_* \dot r(t) \right).
  \end{align*}
  Hence the $A$-path $[L_{r^{-1}(t)}]_*\dot r(t)$ is $A$-geodesic. Since we also have $$[L_{r^{-1}(0)}]_* \dot r(0)=\epsilon({p(a_0)}) \cdot r(0) = a_0,$$ the result follows from the unicity of geodesics. 
\end{proof}


The proof of the following proposition
 is a straightforward consequence of the construction of the local groupoid $\bPl(A)\toto M$ of Theorem \ref{thm:local-groupoid} 
combined with
 Proposition \ref{prop:algebroid-exponential} 
and Lemma \ref{prop:groupoid-exponential}
 (see \cite{MR1973056, MR2911881}).

\begin{proposition}\label{thm:exp-local-diffeo}
  Let $\nabla$ be an $A$-connection on $A$ and $U\subset A$ be as in Proposition \ref{prop:algebroid-exponential}. Then, up to choosing a sufficiently small 
open subset $\Pl(A) \subset \P(A)$ as in
Theorem \ref{thm:local-groupoid} (ii), the restriction of the groupoid 
exponential map
 $\exp|_{U} : U \to \bPl(A)$ is a diffeomorphism onto its image. 
Moreover, the diagram 
    \begin{equation}\label{eq:leaf-space-exp}
      \begin{gathered}
      \begin{tikzpicture}
        \matrix (m) [matrix of math nodes, row sep=3em, column sep=3em] {
          U & \Pl(A)\\
          \vspace{0.1em} & \bPl(A) \\
        };
        \path[-stealth]
        (m-1-1) edge node [above] {$\Phi$} (m-1-2)
        (m-1-2) edge node [right] {$q$} (m-2-2)
        (m-1-1) edge node [below] {$\exp\quad$} (m-2-2);
      \end{tikzpicture}
    \end{gathered}
    \end{equation}
commutes. Here $q : \Pl(A) \to \bPl(A)$ is the quotient map as in  
\eqref{eq:q}.
\end{proposition}

The following simple technical lemma will be useful
 in our subsequent discussions.

\begin{lemma}
  Let $\nabla$ be an $A$-connection on $A$,
 $U\subset A$ and $\Pl(A)\subset \P(A)$ as in   Proposition
\ref{thm:exp-local-diffeo}. Let $\psi : A \to A$ be a morphism of 
Lie algebroids over the identity map,
 and $\Phi : U \to \Pl(A)$ be the map as in \eqref{eq:phi-map}. 
  \begin{itemize}
  \item[(i)] For any $a\in U$ and $v\in T_aA$,  we have
    \begin{equation}
      \label{eq:brussels}
      \mathrm{ev}_t(\tau (\iota_* (\Phi_*(v))))= (\varphi^\axi_t)_*(v), \ \ 
\forall t\in I.
    \end{equation}
  Here $\mathrm{ev}_t$ denotes  the evaluation map
 of a path at time $t$. Also recall that $\iota$ denotes the embedding $\P(A) \hookrightarrow \tP(A)$.
    %
  \item [(ii)] With the notation of Proposition \ref{prop:paths-to-groupoid}, we have, for any $a \in U$ and any $v\in T_a A$:
    \begin{equation}
      \label{eq:zaventem}
      \mathrm{ev}_t (\tau (\iota_*(\tP(\psi)_*(\Phi_*(v)))))= (\psi_*\circ (\varphi^\axi_t)_*)(v), \ \ \forall t\in I.
    \end{equation}
  \end{itemize}
\end{lemma}
\begin{proof} \leavevmode
  \begin{enumerate}
  \item [(i)] This follows  immediately  from \eq{eq:iota-map}.
  \item [(ii)]  By commutative diagram
 (\ref{eq:paths-to-groupoid-tangent-diagram}), we have $\tau \smalcirc \iota_*\smalcirc \tP(\psi)_* =\tP(\psi_*)\smalcirc \tau \smalcirc \iota_*$.
   Hence 
    $$\text{ev}_t(\tau( \iota_*  (\tP(\psi)_*  (\Phi_*(v)))))
    =\text{ev}_t(\tP(\psi_*) (\tau( \iota_*(\Phi_*(v))))))
    =(\psi_*\circ (\varphi_{t}^\axi)_*)(v),$$
    as claimed.\qedhere
\end{enumerate}
\end{proof}

\section{Symplectic Realizations of Poisson Manifolds}

Let $(M,\pi)$ be a Poisson manifold,
 and $A$ its cotangent Lie algebroid $(T^*M)_\pi$. Consider the symplectic
 local groupoid $(\bPl(A)  \toto M, \bomega)$ as in Theorem
 \ref{thm:cattaneo-felder} (ii).

Now, fix $\nabla$ an $A$-connection on $A$,
 and let $U\subset A$ be a sufficiently small open
 neighborhood of the zero section as in Proposition 
\ref{thm:exp-local-diffeo}. Set 
\begin{equation}\label{eq:pullback-omegabar}
  \pomega := \exp^* \bomega
\end{equation}
to be the pullback of $\bomega$ by the groupoid exponential map.
 Then  $\pomega$ is a symplectic form on $U$.

\begin{proposition}
\label{prop:paris}
The symplectic form $\pomega$  can be explicitly expressed as follows: 
\begin{equation}\label{eq:pullback-omegabar-formula}
  \pomega = \int_0^1 (\varphi^\axi_t)^*\omega_\can \; dt,
\end{equation}
where $\omega_\can \in \Omega^2 (T^*M)$
is the canonical symplectic form on $T^*M$, and
 $\varphi^\axi_t$ is the flow of the geodesic vector field $\xi \in \XX (A)$
corresponding to $\nabla$. 
\end{proposition}
\begin{proof}
According to the commuting diagram \eqref{eq:leaf-space-exp},
 we have $\exp^* = \Phi^* \circ q^*$, and by
 \eq{eq:cattaneo-felder-reduction}, we have
 $q^*\bar\omega = \iota^*\tilde{\omega}_{\text{can}}$. Thus
  \begin{equation*}
    \uomega = \Phi^* \iota^*\,\tilde{\omega}_{\text{can}}. 
  \end{equation*}
  On the other hand, $\forall a\in U$ and  $\forall u,v\in T_aA$,
we have
  \begin{align*}
   \big( \Phi^* \iota^*\tilde{\omega}_{\text{can}}\big)(u,v) & = \int_0^1 \omega_{\text{can}}([\tau (\iota_*(\Phi_*(u)))](t), [\tau (\iota_*(\Phi_*(v)))](t))\, dt \\
    & = \int_0^1 \omega_{\text{can}}((\varphi^\nabla_t)_*(u), (\varphi^\nabla_t)_*(v)) \, dt \\
    & = \int_0^1 \big( (\varphi^\nabla_t)^*\omega_{\text{can}} \big) (u,v) \, dt,
  \end{align*}
where we used  \eq{eq:cattaneo-felder-form} for the first equality, and
 \eq{eq:brussels} for the second equality.
The    conclusion thus follows.
\end{proof}

As an immediate consequence of \eq{eq:pullback-omegabar-formula},
 we recover the following theorem,  part (i) of which
was proved  by Crainic-M{\v{a}}rcu{\c{t}}
 by a direct computation \cite{MR2900786}.
See also \cite{MR2116732} for related results.

\begin{theorem}\label{thm:symplectic-realization-nonPN}
Let $(M,\pi)$ be a Poisson manifold
and $A=(T^*M)_\pi$ its cotangent Lie algebroid.
Fix $\nabla$ an $A$-connection on $A$ and let $\xi \in \XX(A)$ be the associated geodesic vector field.
Also let $U\subset A$ be, as in 
Proposition \ref{thm:exp-local-diffeo}, a sufficiently small 
 open neighborhood of the zero section in $A$ so that, in particular, the flow $\varphi^\nabla_t(a_0)$ of $\xi$ is defined for all $t\in I$,
and all  $a_0 \in U$. Then, 
\begin{itemize}
\item[(i)] the projection $\pr|_U: U \subset T^*M \to M$ together
with  the symplectic form $\uomega\in \Omega^2(U)$,
  as defined by \eq{eq:pullback-omegabar-formula}, is a symplectic realization
 of $(M,\pi)$; and
\item [(ii)] the zero section of $T^*M$ is a Lagrangian submanifold of $U$. 
\end{itemize}
\end{theorem}

The geodesic vector field $\xi \in \XX(A)$ is 
 called a \emph{Poisson spray} in \cite{MR2900786, 2015arXiv150107830P}.

\section{Symplectic-Nijenhuis Local Groupoids}\label{sec:symplectic-nijenhuis-local-groupoids}

There is a one-to-one correspondence between Poisson manifolds and 
 symplectic local groupoids. This
 is in fact a special case of the Mackenzie-Xu correspondence
 (Theorem \ref{thm:mackenzie-xu}) recalled in the appendix below.
Such a  correspondence can also be extended to a one-to-one correspondence
 between Poisson-\emph{Nijenhuis} manifolds and 
 symplectic-\emph{Nijenhuis} local groupoids. This   result is due to
Sti\'enon--Xu \cite{MR2276462},
 which we recall in Theorem \ref{thm:symplectic-nijenhuis-groupoids}. In this section, we briefly go over the main idea of  its
 proof.

Let $\rgpd \toto M$ be a local Lie groupoid with source and target maps $\alpha : \rgpd \to M$ and $\beta : \rgpd \to M$, respectively, and with unit map $\epsilon : M \hookrightarrow \rgpd$. Recall that a $(1,1)$-tensor
 $\bN: T\rgpd \to T\rgpd$ on $\rgpd$ is said to be \emph{multiplicative}
 if it defines  a morphism of local Lie groupoids

\begin{equation}
\label{eq:Paris}
  \begin{gathered}
  \begin{tikzpicture}
    \matrix (m) [matrix of math nodes, row sep=3em, column sep=4em] {
      T\rgpd & T\rgpd \\
      TM & TM \\
    };
    \path[-stealth]
    (m-1-1) edge node [above] {$\bN$} (m-1-2)
    (m-1-2) edge [transform canvas={xshift=-0.3em}] node [left] {$\alpha_*$} (m-2-2)
    (m-1-2) edge [transform canvas={xshift=0.3em}] node [right] {$\beta_*$} (m-2-2)
    (m-1-1) edge [transform canvas={xshift=-0.3em}] node [left] {$\alpha_*$} (m-2-1)
    (m-1-1) edge [transform canvas={xshift=0.3em}] node [right] {$\beta_*$} (m-2-1)
    (m-2-1) edge node [below] {$\bN|_{\epsilon_*(TM)}$} (m-2-2);
  \end{tikzpicture}
\end{gathered}
\end{equation}

Here $T\rgpd\toto TM$ is the tangent local groupoid. Note that it is implicitly assumed, as part of the condition, that  $\bN(\epsilon_*(TM)) \subset \epsilon_*(TM)$.

\begin{definition}\label{def:symplectic-nijenhuis-groupoids}
A \emph{symplectic-Nijenhuis local groupoid} is a symplectic local  groupoid
$(\rgpd \toto M, \bomega)$ equipped with a  multiplicative
$(1, 1)$-tensor $\bN: T\rgpd \to T\rgpd$ such that the triple
$(\rgpd, \bomega, \bN )$  is a  symplectic-Nijenhuis manifold.
\end{definition}

\begin{remark}\label{rem:symplectic-nijenhuis-local-groupoids-two-poisson}
  Any symplectic-Nijenhuis local groupoid defines two Poisson local groupoid
 structures on the same underlying local groupoid $\rgpd \toto M$. 
Indeed,
 let $(\rgpd \toto M, \bomega, \bN)$ be a symplectic-Nijenhuis local groupoid and denote by $\bpi \in \Gamma(\wedge^2 T\rgpd)$ the Poisson bivector field given by  inverting  $\bomega$. 
Then the pair $(\rgpd\toto M, \bpi)$ is a Poisson local groupoid.
 Moreover,
 from Proposition \ref{prop:second-poisson-pn}, it follows that
 the bivector field $\bpi_{\bN}$ defined by 
\begin{equation}
\label{eq:6.2}
\bpi_{\bN}^\sharp = \bN \smalcirc \bpi^\sharp
\end{equation}
 is another multiplicative Poisson structure on $\rgpd$,
 and thus in particular gives another Poisson local groupoid $(\rgpd \toto M, \bpi_{\bN})$. Note that, in general, 
 the Nijenhuis tensor $\bN: T\Sigma \to  T\Sigma$ may  not  be invertible,
 and therefore the Poisson bivector field $\bpi_{\bN}$ is not
necessarily  non-degenerate.
 In particular, we do not 
automatically have two symplectic groupoid structures on $\rgpd \toto M$.
\end{remark}

The following theorem is due to Sti\'enon--Xu \cite{MR2276462}.

\begin{theorem}\mbox{}
\label{thm:symplectic-nijenhuis-groupoids}
\begin{itemize}
  \item [(i)] 
 The unit space of a symplectic-Nijenhuis local
 groupoid inherits an induced  Poisson--Nijenhuis
 manifold structure.
\item [(ii)]
 Given a Poisson--Nijenhuis manifold $(M, \pi, N)$, there is a unique,
 up to isomorphisms,
 symplectic--Nijenhuis local groupoid whose induced
Poisson--Nijenhuis structure on the unit space is $(M, \pi, N)$.
\end{itemize} 
In other words, there is a one-to-one correspondence between
Poisson--Nijenhuis manifolds and symplectic-Nijenhuis local 
 groupoids.
\end{theorem}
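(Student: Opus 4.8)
The plan is to prove Theorem~\ref{thm:symplectic-nijenhuis-groupoids} by carefully separating the two implications, and in both directions to exploit the integration theory of Lie bialgebroids of Mackenzie--Xu, which is the natural home for multiplicative structures. For part~(i), I would start from a symplectic Nijenhuis local groupoid $(\rgpd\toto M,\bomega,\bN)$ and show that the restriction of the induced Poisson--Nijenhuis data descends to the unit space $M$. The symplectic form $\bomega$ already makes $\rgpd\toto M$ a symplectic local groupoid, so by the standard Coste--Dazord--Weinstein picture its Lie algebroid is $(T^*M)_\pi$ and $M$ inherits a Poisson bivector $\pi$ with $\pi^\sharp = \beta_*\smalcirc(\bomega^\flat)^{-1}\smalcirc\epsilon_*\transpose{(\cdot)}$, in the usual way. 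The new ingredient is the multiplicative $(1,1)$-tensor $\bN$: since $\bN:T\rgpd\to T\rgpd$ is a morphism of the tangent groupoid $T\rgpd\toto TM$, it restricts along the unit embedding $\epsilon:M\hookrightarrow\rgpd$ to a $(1,1)$-tensor $N:TM\to TM$, and I would check that the Nijenhuis torsion of $\bN$ vanishing forces the torsion of $N$ to vanish (multiplicativity intertwines $\bN$ with $N$ on the image of $\epsilon_*$). The compatibility condition relating $\pi$ and $N$ in a Poisson--Nijenhuis structure (the vanishing of the Magri--Morosi concomitant) should then follow from the compatibility of $\bomega$ and $\bN$ on $\rgpd$ pushed down via $\beta$; this is the part of (i) requiring the most bookkeeping.

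For part~(ii), the construction is the heart of the matter and should proceed by producing \emph{two} multiplicative symplectic forms on a single local groupoid. Given $(M,\pi,N)$, I would first form the cotangent Lie algebroid $A=(T^*M)_\pi$ and integrate it, via Theorem~\ref{thm:local-groupoid}, to the local Lie groupoid $\bPl(A)\toto M$, which by Theorem~\ref{thm:cattaneo-felder} carries the Cattaneo--Felder symplectic form $\bomega$. By Proposition~\ref{prop:second-poisson-pn} the deformed bivector $\pi_N$ (defined by $\pi_N^\sharp=\pi^\sharp\smalcirc N$) is again Poisson and the pair $(\pi,\pi_N)$ forms a Poisson pencil; moreover the Poisson--Nijenhuis axioms guarantee that $N^*:T^*M\to T^*M$ is a Lie algebroid morphism $(T^*M)_{\pi_N}\to(T^*M)_\pi$. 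The plan is to integrate this second Poisson structure as well, obtaining a second symplectic form $\bomega'$ on \emph{the same} leaf space, using the fact that $N^*$ induces a map of path spaces and hence, by Proposition~\ref{prop:paths-to-groupoid}, a morphism of local groupoids compatible with the quotient. I would then \emph{define} $\bN$ on $\rgpd$ by the relation $\bomega'{}^\flat = \bomega^\flat\smalcirc\bN$, i.e.\ $\bN=(\bomega^\flat)^{-1}\smalcirc\bomega'{}^\flat$, and the content of the theorem is that this $\bN$ is multiplicative, that $(\rgpd,\bomega,\bN)$ is a symplectic Nijenhuis manifold, and that its induced structure on $M$ is the given $(M,\pi,N)$.

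The verification that $\bN$ is multiplicative and that $\bomega,\bomega'$ are \emph{simultaneously} multiplicative on one and the same underlying local groupoid is where I expect the main obstacle to lie. The subtlety is that a priori the two Poisson structures $\pi$ and $\pi_N$ integrate to \emph{different} local symplectic groupoids; one must show that they share the same underlying local Lie groupoid $\rgpd\toto M$, with the two symplectic forms differing precisely by the multiplicative tensor $\bN$. My approach is to push everything through the integration of the Lie bialgebroid $((T^*M)_\pi, TM)$: the Poisson--Nijenhuis structure is equivalent to a compatible pair of Lie bialgebroid structures, and the Mackenzie--Xu theory (Theorem~\ref{thm:mackenzie-xu}) integrates the whole package coherently, so that the two symplectic forms automatically live on one groupoid and $\bN$ is automatically a groupoid morphism. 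Concretely, at the path-space level I would use the explicit Cattaneo--Felder form~\eqref{eq:cattaneo-felder-form} to write both $\tomega_\can$ and its $N^*$-deformed counterpart, observe they are related by the fibrewise action of $N$ through $\tau$, and check that this relation descends through the quotient $q$ of Theorem~\ref{thm:cattaneo-felder}, thereby yielding a well-defined multiplicative $\bN$ on the quotient. The Nijenhuis torsion of $\bN$ should then vanish because the torsion of $N$ does and because torsion is compatible with the groupoid multiplication; uniqueness up to isomorphism follows from the uniqueness in Theorem~\ref{thm:mackenzie-xu} together with the rigidity of the integration of $(T^*M)_\pi$.
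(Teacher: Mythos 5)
Your part~(i) is essentially the paper's argument (restrict the multiplicative tensor $\bN$ to the units, push the Poisson data down to $M$), though the paper organizes the compatibility more cleanly: it forms the second multiplicative Poisson bivector $\bpi_{\bN}$ on $\rgpd$, pushes \emph{both} $\bpi$ and $\bpi_{\bN}$ forward by the source map, and deduces $[\pi,\pi']=[\pi',\pi']=0$ from $[\bpi,\bpi_{\bN}]=[\bpi_{\bN},\bpi_{\bN}]=0$, rather than chasing the Magri--Morosi concomitant directly.

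Part~(ii) has a genuine gap: your construction implicitly assumes $N$ is invertible, while the theorem is stated for an arbitrary Poisson--Nijenhuis manifold. You propose to integrate the second Poisson structure $\pi_N$ to a second \emph{symplectic} form $\bomega'$ on the same local groupoid and to set $\bN=(\bomega\bemol)^{-1}\smalcirc\bomega'{}\bemol$. But when $N$ is degenerate, $\transpose{N}:(T^*M)_{\pi_N}\to(T^*M)_\pi$ is not an isomorphism of Lie algebroids, so the symplectic local groupoid of $(M,\pi_N)$ has a genuinely different underlying local Lie groupoid from $\bPl((T^*M)_\pi)$, and there is no second symplectic form to transport; the paper explicitly remarks that in general $\bpi_{\bN}$ is \emph{not} symplectic. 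The paper's route avoids this: it integrates the Lie bialgebroid $((T^*M)_\pi,(TM)_N)$ via the Mackenzie--Xu correspondence (Theorem~\ref{thm:mackenzie-xu}), which keeps the underlying local groupoid equal to $\bPl((T^*M)_\pi)$ and produces only a multiplicative Poisson bivector $\bpi_{\bN}$ there, and then defines $\bN=\bpi_{\bN}^\sharp\smalcirc\bomega\bemol$ --- no inversion of the second structure is needed. (Even in the invertible case your formula has the composition in the wrong order: $(\bomega\bemol)^{-1}\smalcirc\bomega'{}\bemol$ gives $\bN^{-1}$; compare \eqref{eq:pN}.) Finally, the crucial compatibility $[\bpi,\bpi_{\bN}]=0$, which makes $(\rgpd,\bomega,\bN)$ a symplectic Nijenhuis manifold, is not addressed in your sketch beyond an appeal to coherence of the integration; the paper derives it from the identity $[\delta,d_{\mathrm{DR}}]=0$ for the coboundary $\delta$ of $(TM)_N$ together with the Universal Lifting Theorem, and that step needs to be supplied.
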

We will  sketch a proof of this theorem
since  we will need some intermediate
 results for our argument  later on 
(see Proposition \ref{pro:Zurich}),
which seem to not have appeared in the literature.

\begin{proof}[Proof of Theorem  \ref{thm:symplectic-nijenhuis-groupoids}]
To prove $(i)$, let $(\rgpd\toto M,\bomega,\bN)$ be a
 symplectic-Nijenhuis
local groupoid. Let  $\bpi$ be the Poisson bivector field on $\rgpd$ which
is the inverse of $\bomega$.
The pair $(\rgpd\toto M, \bomega)$ is a symplectic local groupoid.
  It is standard \cite{MR996653, MR866024}  that
 the pushforward 
\begin{equation}
\label{eq:1}
\pi:=\alpha_*\bpi 
\end{equation}
is a well defined Poisson  bivector field on $M$, and that
 the Lie algebroid of $\rgpd\toto M$ is isomorphic to the cotangent Lie algebroid $(T^*M)_\pi$ of
  $(M, \pi)$.

Now, let $\bpi_{\bN}\in\Gamma(\wedge^2T\rgpd)$ be the bivector field on
 $\rgpd$ as defined   by  \eq{eq:6.2}.
 Then the pair $(\rgpd \toto M, \bpi_{\bN})$ is a Poisson local groupoid. 
Analogous to Eq. \eqref{eq:1}, set
\begin{equation}
\label{eq:2}
\pi':=\alpha_* \bpi_{\bN}.
\end{equation}
 Then $\pi'$ is a well defined Poisson bivector field on $M$ as
well \cite{MR959095}. 
Finally, from Proposition \ref{thm:alt-pn-breakdown},
it follows that  the Schouten bracket $[\bpi, \bpi_{\bN}]$ vanishes, and
 thus we  have  $[\pi,\pi']=0$.

On the other hand, the Lie groupoid morphism $\bN :T\rgpd \to T\rgpd$
as in  \eqref{eq:Paris}
 induces a map $N=\bN|_{\epsilon_*(TM)}: TM \to TM$
 on its unit space, which is, clearly, a $(1,1)$-tensor. The Nijenhuis torsion free
 condition for $N$ then follows from that of $\bN$.
Moreover, it is clear that 
\begin{equation}
\label{eq:3}
\pi'^\sharp = N \smalcirc \pi^\sharp.
\end{equation}
Thus  $(M,\pi, N)$ is indeed a Poisson--Nijenhuis manifold, as desired.


Conversely, to see $(ii)$, let $(M,\pi,N)$ be a Poisson--Nijenhuis manifold.
 Let  $A=(T^*M)_{\pi}$ be the cotangent Lie algebroid of $(M, \pi)$,
 and $\bPl(A)\toto M$  be
the corresponding local Lie groupoid as in
  Theorem \ref{thm:local-groupoid}. Let $\bomega$ be the multiplicative
 symplectic form on $\bPl(A)$ 
as in  Theorem \ref{thm:cattaneo-felder} (ii),
 and $\bpi$ be its associated Poisson
 bivector field. Then, under the correspondence between Lie bialgebroids and Poisson local groupoids spelled out in Theorem \ref{thm:mackenzie-xu}, the Poisson  groupoid $(\bPl(A)\toto M, \bar\pi)$ is associated to the Lie bialgebroid $((T^*M)_\pi, TM)$.

 On the other hand, out of the same Poisson--Nijenhuis structure
 $(M,\pi,N)$, we can construct   another natural Lie bialgebroid
 $((T^*M)_\pi, (TM)_N)$ \cite{MR1421686}.
Here, the  Lie algebroid $(T^*M)_\pi$ is, as usual, the cotangent 
 Lie algebroid of $(M,\pi)$. The  Lie algebroid $(TM)_N$
 consists of the triple $(TM, \rho_N, [\cdot,\cdot]_N)$ defined as follows. The underlying vector bundle is the tangent bundle $TM$ of $M$, while the anchor $\rho_N$ and bracket $[\cdot, \cdot]_N$ are given, respectively, by
\begin{align*}
  \rho_N(X) & = NX,  \quad \text{and}\\
  [X,Y]_N &= [NX, Y] + [X,NY] - N[X,Y], \ \ \ \forall X, Y \in \Gamma (TM).
\end{align*}
One should think of $(TM)_N$ as a twisted version of the tangent Lie algebroid
 $TM$,
 whose twist is given by the Nijenhuis tensor $N$.

To this  Lie bialgebroid $((T^*M)_\pi, (TM)_N)$, we can apply
 Theorem \ref{thm:mackenzie-xu}   to obtain
 a second natural   
multiplicative Poisson bivector field $\bpi'$
 on $\bPl(A)$, which makes the pair $(\bPl(A)\toto M, \bpi')$ 
into a Poisson local groupoid.

To complete the proof of part $(ii)$,  it  remains to prove
 that the two multiplicative Poisson structures $\bpi$ and $\bpi'$
 on $\bPl(A)$ satisfy  the condition:
 \begin{equation}\label{part-ii-vanishing-bracket}
   [\bpi, \bpi'] = 0.
 \end{equation}
 Indeed, assuming Eq. \eqref{part-ii-vanishing-bracket} holds, let 
 \begin{equation}\label{eq:bar-n}
   \bN=(\bpi')\diese\circ\bomega\bemol:T\bPl(A)\to T\bPl(A).
 \end{equation}
 From Proposition \ref{pro:Vaisman}, it follows
 that $\bN$ is indeed  a Nijenhuis tensor,
 and therefore   $(\bPl(A), \bpi, \bN)$ is a   Poisson-Nijenhuis manifold.
Moreover $\bN$ is a multiplicative $(1, 1)$-tensor. Thus it follows  
that  $(\bPl(A)\toto M,\bomega,\bN)$ is a symplectic-Nijenhuis local groupoid. 

In order to prove Eq.\ \eqref{part-ii-vanishing-bracket}, let
 $\delta: \Gamma (\wedge^\bullet T^*M)\to \Gamma (\wedge^{\bullet+1} T^*M)$
be the \Liealgebroid of the Lie algebroid $(TM)_N$ 
(see Eq.\ \eqref{eq:BRU}). Then  one  can easily  check that
\begin{equation}\label{part-ii-compatibility-derham}
  [\delta, d_{\text{DR}}]=0,
\end{equation}
where $d_{\text{DR}}$ is the De Rham  differential operator on
$\Gamma(\wedge^\bullet T^*M) = \Omega^\bullet(M)$ 
\cite[Lemma 5.3]{MR2276462}. 
%
It is  well known that
when $A=TM$ is the tangent Lie algebroid of a manifold $M$,
its \Liealgebroid  is the De Rham differential
 operator  $d_{\text{DR}}$. According to the  Universal Lifting
 Theorem \cite{MR2911881}, we see that \eq{part-ii-compatibility-derham}
implies \eq{part-ii-vanishing-bracket}.

Finally,  it is simple to check that the two constructions we spelled out in showing parts $(i)$ and $(ii)$ 
are  indeed inverse to each other. This  concludes the proof of Theorem \ref{thm:symplectic-nijenhuis-groupoids}.
\end{proof}

Let us single out the following important fact that  we will
need later on.

\begin{proposition} 
\label{pro:Zurich}
Let $(\rgpd\toto M,\bomega,\bN)$ be a symplectic-Nijenhuis local
 groupoid with the induced  Poisson--Nijenhuis structure
 $(M, \pi, N)$ on its unit space as in Theorem  \ref{thm:symplectic-nijenhuis-groupoids}.
 Then the source map $\alpha: \rgpd\to M$ is a Poisson--Nijenhuis map. 
In particular, we have
$$\alpha_* \bpi=\pi, \ \alpha_*\bpi_{\bN}=\pi_N, \ \alpha_* \smalcirc \bN  
= N\smalcirc \alpha_*, $$    
where $\bpi$  denotes  the bivector field on $\rgpd$ inverse to  $\bomega$.
\end{proposition}
\begin{proof}
The first identity is exactly Eq. \eqref{eq:1}.
The second identity follows  from Eqs. \eqref{eq:2}-\eqref{eq:3}.
Finally, the last identity  is a consequence  of 
the fact that $\bN: T\Sigma\to T\Sigma$ is a groupoid morphism
and therefore commutes with the source map $\alpha_*: T\Sigma\to TM$
---see  \eqref{eq:Paris}. 
\end{proof}

\section{The Complete Lift to The Cotangent Bundle}

We start by recalling the definition of the complete lift
of  $(1, 1)$-tensors to the cotangent bundle
 and some related standard facts.  For details, we refer the readers
to  \cite{MR1021489} and  its references.

Let $N : TM \to TM$ be a $(1,1)$-tensor on a manifold $M$. Denote
 by $$\langle \cdot, \cdot \rangle : T^*M \times_M TM \to M \times \Rr$$ the canonical pairing. 
There is a natural $1$-form $\theta_N \in \Omega^1(T^*M)$ defined by
$$ \theta_N(u) = \langle \lambda, N( p_*(u)) \rangle $$
$\forall u\in T_\lambda (T^*M)$, where $p: T^*M \to M$ is the  canonical
 projection. In particular, if $N=\Id$,
 then $\theta_N$ is just the
 Liouville form on $T^*M$.

\begin{definition}
   The \emph{complete lift of $N$ to the cotangent bundle} is the
 $(1,1)$-tensor
 $$N^c : TT^*M \to TT^*M$$ on $T^*M$ defined by the property that
  \begin{equation}
\label{eq:Naples}
 \omega_{\text{can}}( N^c u, v ) = (d\theta_N)(u,  v), 
\end{equation}
  for any $\lambda\in T^*M$ and any $u,v\in T_\lambda (T^*M)$.
Here $\omega_\can \in \Omega^2 (T^*M)$
is  the canonical symplectic form on $T^*M$.
\end{definition}

It can be checked, by a direct computation, that
\begin{equation}\label{eq:complete-lift-to-tstar}
  \omega_\can(N^c u,v) = \omega_\can((\transpose N)_* u, (\transpose N)_* v),
\end{equation}
 $\forall \lambda\in T^*M, \ u,v\in T_\lambda (T^*M)$. 
  Here $N^T : T^*M \to T^*M$ is the  dual of $N$,
 and $(N^T)_* : TT^*M \to TT^*M$ denotes its tangent map.

\begin{lemma}\label{lem:lie-poisson-equals-2nd-poisson-pn}
%
Let $N:TM\to TM$ be a  Nijenhuis tensor on $M$.
Denote by $\pi'\in \XX^2(T^*M)$ the Poisson bivector
field on  $T^*M$ of the Lie--Poisson structure
corresponding to  the Lie algebroid $(TM)_N$. Then
\begin{equation}
\label{eq:Rome}
   (\pi')^\sharp \smalcirc \omega_\can \bemol = N^c. 
\end{equation}
\end{lemma}
\begin{pf}
  For any $X \in \XX (M)$, let $\lin{X}\in C^\infty   (T^*M)$
 be the fibrewise linear function on $T^*M$ defined by
  $$\lin{X}(\lambda) = \langle \lambda, X_x \rangle,$$
  for any $\lambda \in T^*_xM$ $(x \in M)$.
  By definition, for any  $X,Y\in \XX(M)$ and any 
 $f, g \in \smooth(M)$, we have:
  \begin{align}
    &\{ \lin{X}, \lin{Y} \}_{\pi'} = \lin{[N(X),Y] + [X,N(Y)] - N([X,Y])},\label{eq:tpiN-1}\\
    &\{\lin{X}, p^*f\}_{\pi'} = p^*\langle df, N(X)\rangle\label{eq:tpiN-2},\\
    &\{p^*f, p^*g\}_{\pi'} = 0\label{eq:tpiN-3}.
  \end{align}
  For any given $\psi\in \smooth(T^*M)$, we denote by $\ham{\psi}\in \XX(T^*M)$ the Hamiltonian
 vector field of $\psi$ with respect to  the canonical  symplectic
 structure on $T^*M$, 
i.e.\ $\ham{\psi} = \pi_\can^\sharp(d\psi)$, where 
$\pi_\can^\sharp = (\omega_\can \bemol)^{-1}$.
  Note that Eq. \eqref{eq:Rome} is equivalent to
\begin{equation}
\label{eq:Athens}
 N^c\smalcirc \pi_\can^\sharp=  (\pi')^\sharp  .
\end{equation}
The latter is equivalent to 
\begin{equation}
\label{eq:Padova}
N^c(\ham{F})(G) =  (\pi')^\sharp(dF)(G)
\end{equation}
for any $F, G\in  C^\infty (T^*M)$.

From Eq. \eqref{eq:Naples}, it follows that
 $\omega_\can \bemol \smalcirc N^c$ is skew-symmetric.
Since $\pi_\can^\sharp = (\omega_\can \bemol )^{-1}$,
a simple linear algebra argument implies that
$N^c\smalcirc \pi_\can^\sharp =N^c\smalcirc (\omega_\can \bemol )^{-1}$ is
 also skew-symmetric. 
Note that $C^\infty (T^*M)$ is spanned locally by two types of
 functions of the   form  $\lin{X}$ and $p^*f$,  $\forall X\in \XX (M)$ and 
$f \in C^\infty (M)$. In order to prove Eq.  \eqref{eq:Padova},
 it thus suffices
to prove the following identities:
\begin{align}
    &N^c(\ham{p^*f}) (p^*g) = (\pi')^\sharp(d(p^*f))(p^*g), \ \ \forall f, g
 \in C^\infty (M)  \label{eq:Geneva}\\
    &N^c(\ham{\lin{X}}) (\lin{Y}) =  (\pi')^\sharp(d\lin{X}) (\lin{Y}) \ \ \forall X, Y\in \XX(M) \label{eq:Prugia},\\
    &N^c(\ham{\lin{X}}) (p^*f)=  (\pi')^\sharp(d\lin{X}) (p^*f), \ \  \forall f
\in C^\infty (M), \  X\in \XX(M)
\label{eq:Scale}.
  \end{align}

Since $p_* \big(\ham{p^*f}\big)=0$, it follows from the definition of  the
complete lift $N^c$ that $p_*\big(N^c(\ham{p^*f}\big)=0$.
Hence both sides of Eq. \eqref{eq:Geneva} vanish.

Now we  prove Eq. \eqref{eq:Prugia}.
 Note 	 that the following relation
 is standard \cite[Proposition 5.4.3]{MR1021489}:

 \begin{equation}\label{eq:standard-completelift}
   N^c(\ham{\lin{X}}) - \ham{\lin{N(X)}} = \pi_\can^\sharp(\theta_{L_XN}),
 \end{equation}
 where  $L_X N$ denotes the usual Lie derivative of the $(1,1)$-tensor
 $N$ given by 
$(L_XN)(Y) = [X,N(Y)]-N([X,Y]), \ \forall X, Y\in \XX(M)$. 
  Also, the following identity  can be   verified directly:
  \begin{equation}\label{eq:standard-liederiv}
    [\pi_\can^\sharp(\theta_{L_XN})](\lin{Y}) = \lin{(L_XN)(Y)}.
  \end{equation}
  From  \eq{eq:standard-completelift}-\eq{eq:standard-liederiv},
it thus follows that
  \begin{align*}
    [N^c(\ham{\lin{X}})](\lin{Y}) & = \ham{\lin{N(X)}}(\lin{Y}) + (\pi_\can^\sharp(\theta_{L_XN}))(\lin{Y}) \\
    & = \lin{[N(X),Y]} + \lin{(L_XN)(Y)} \\
    & = \lin{[N(X),Y] + [X,N(Y)] - N([X,Y])}.
    \numberthis\label{eq:compat-linlin}
  \end{align*}
Now Eq. \eqref{eq:Prugia} follows from combining 
 Eq.\ (\ref{eq:compat-linlin}) with Eq.\ (\ref{eq:tpiN-1}).

Finally, we prove  Eq. \eqref{eq:Scale}.
  First,   we  have $[\pi_\can^\sharp(\theta_{L_XN})](p^*f) = 0$.
 Now, applying \eq{eq:standard-completelift}
 to the  function  $p^*f$, we   have
  \begin{align*}
    [N^c(\ham{\lin{X}})](p^*f) & = \ham{\lin{N(X)}}(p^*f) +
 (\pi_\can^\sharp(\theta_{L_XN}))(p^*f)\\
    &= p^* \big( (NX)( f) \big)\\
&=(\pi')^\sharp(d\lin{X}) (p^*f).
 \numberthis \label{eq:compat-linpullback}
  \end{align*}
This concludes the proof of  the lemma.
\end{pf}

\begin{remark}
According to a theorem of Vaisman \cite{MR1345608},
the Poisson bivector field $\pi'$ is compatible with the canonical Poisson structure $\pi_\can$ on $T^*M$ in the sense that the Schouten bracket $[\pi', \pi_\can]$ vanishes. 
Therefore, from Lemma \ref{lem:lie-poisson-equals-2nd-poisson-pn}, it 
follows that $(T^*M, \pi_\can, N^c)$ is a Poisson--Nijenhuis structure
 on $T^*M$, and  moreover its \emph{second} Poisson tensor $\pi_{N^c}$
 defined, as usual, by
\begin{equation}\label{eq:poisson-cotangent-from-pn}
  \pi_{N^c}^\sharp = N^c\smalcirc\pi_\can^\sharp 
\end{equation}
coincides with the Lie--Poisson structure of the Lie algebroid 
$(TM)_N$.
\end{remark}

We now recall the following well-known fact from the general theory of Poisson groupoids \cite{MR1262213,MR1746902}. For any Poisson local groupoid $(\rgpd\toto M,\bpi)$ with Lie bialgebroid
 $(A ,A^*)$, the  following diagram of vector bundle morphisms:
  \begin{equation}\label{eq:poisson-groupoid-to-bialgebroid-diagram}
    \begin{gathered}
      \begin{tikzpicture}
        \matrix (m) [matrix of math nodes, row sep=3em, column sep=8em] {
          \Lie(T^*\rgpd) & \Lie(T\rgpd) \\
          T^*A & TA \\
        };
        \path[-stealth]
        (m-1-1) edge node [above] {$\Lie(\bpi^\sharp)$} (m-1-2)
        (m-1-1) edge node [left] {$\flippypairy_\rgpd$} (m-2-1)
        (m-1-2) edge node [right] {$\flippy_\rgpd$} (m-2-2)
        (m-2-1) edge node [below] {$\pi^\sharp_{A}$} (m-2-2);
      \end{tikzpicture}
    \end{gathered}
  \end{equation}
  commutes. 
Here $\pi_{A}$ denotes the Lie--Poisson structure on $A$ induced by the Lie 
algebroid structure on $A^*$,
 and $\flippypairy_\rgpd, \flippy_\rgpd$ are 
 natural vector bundle isomorphisms which we shall not make explicit for
the sake of brevity. See  \cite{MR1262213, MR1746902} for more details.

Let $\bN: T\rgpd \to T\rgpd$ be a multiplicative $(1, 1)$-tensor on $\rgpd$. There is an associated tensor $TA\to TA$ on $A$,
which we will denote by $\LLie(\bN)$ and call the 
\emph{infinitesimal of $\bN$} following \cite{MR2545872}. The relationship between the infinitesimal of $\bN$ and the image of $N$ under the usual 
Lie functor is given  by the simple identity 
\begin{equation}
\label{eq:Lie}
\LLie(\bN) = \flippy_\rgpd \smalcirc \Lie(\bN) \smalcirc \flippy_\rgpd^{-1}.
\end{equation}

Let $(\rgpd\toto M, \bomega,\bN)$ be a symplectic-Nijenhuis local
 groupoid with induced Poisson--Nijenhuis structure $(M,\pi,N)$ on the unit
 space $M$.
 We can now show that the infinitesimal of the Nijenhuis tensor $\bN$ coincides with the complete lift $N^c: TT^*M \to TT^*M$ of $N$.

\begin{proposition}\label{lem:lie-lifted-nijenhuis}
  Let $(\rgpd\toto M, \bomega, \bN)$ be a symplectic-Nijenhuis local
  groupoid, and $(M, \pi, N)$ the corresponding
Poisson--Nijenhuis structure on $M$, as in
Theorem \ref{thm:symplectic-nijenhuis-groupoids}.
Then 
\begin{equation}
\label{eq:Orly}
 \LLie(\bN) = N^c .
\end{equation}
\end{proposition}
\begin{proof}
  By definition, $\bN = \bpi_\bN^\sharp \smalcirc \bomega\bemol$,
 and hence
  $$\LLie(\bN) = \flippy_\rgpd \smalcirc \Lie(\bpi_\bN^\sharp) \smalcirc \Lie(\bomega\bemol) \smalcirc\flippy_\rgpd^{-1}.$$
  We now check that
\begin{equation}
\label{eq:SCE}
\Lie(\bomega\bemol) = (\flippypairy_\rgpd)^{-1} \smalcirc \omega_\can\bemol \smalcirc \flippy_\rgpd.
\end{equation} 

Let $\bpi$ be the Poisson bivector field on $\rgpd$
 inverse to $\bomega$. Then $(\rgpd\toto M, \bpi)$ is a Poisson groupoid,
 and, by Theorem \ref{thm:mackenzie-xu}, its Lie bialgebroid is
 $((T^*M)_\pi,TM)$. Since the Lie--Poisson structure
induced by the tangent bundle Lie algebroid $TM$ coincides with the 
canonical symplectic structure on $T^*M$, the commutativity of 
diagram (\ref{eq:poisson-groupoid-to-bialgebroid-diagram}) implies that
 $$\Lie(\bpi^\sharp)= \flippy_\rgpd^{-1} \smalcirc 
\pi_\can^\sharp \smalcirc \flippypairy_\rgpd.$$
 The latter is equivalent to Eq. \eqref{eq:SCE}, as claimed.

Finally, recall that the Lie bialgebroid of
the Poisson groupoid
 $(\rgpd\toto M, \bpi_\bN)$ is isomorphic to $((T^*M)_{\pi}, (TM)_N)$ according to the proof of
Theorem \ref{thm:symplectic-nijenhuis-groupoids}.
 From  commutative  diagram (\ref{eq:poisson-groupoid-to-bialgebroid-diagram}), 
 we thus have 
$$\Lie(\bpi^\sharp_\bN) =
 \flippy_\rgpd^{-1} \smalcirc (\pi')^\sharp\smalcirc \flippypairy_\rgpd.$$
Here $\pi'$ is the Lie--Poisson structure on $T^*M$ corresponding
to the Lie algebroid $(TM)_N$   as in 
Lemma \ref{lem:lie-poisson-equals-2nd-poisson-pn}.
In particular, we have
$$ \LLie(\bN) = \flippy_\rgpd \smalcirc \Lie(\bpi_\bN^\sharp)
 \smalcirc \Lie(\bomega\bemol) \smalcirc\flippy_\rgpd^{-1} =
 (\pi')^\sharp \smalcirc \omega_\can\bemol = N^c. $$
Here we used Eq.\ \eqref{eq:SCE} for the second equality, and
 Lemma \ref{lem:lie-poisson-equals-2nd-poisson-pn} 
for the last equality. This concludes the proof.
\end{proof}

\section{Symplectic Realizations of Non-Degenerate Poisson--Nijenhuis Manifolds}

In this section,
 we conclude the proof of a more general version of 
Theorem \ref{thm:symplectic-realization-holomorphic} that holds in the case
 of non-degenerate Poisson--Nijenhuis structures. The main result here is Theorem \ref{thm:symplectic-realization-PN}. 
According to  Lemma \ref{lem:hp-is-pn}, it is clear that this includes the case of holomorphic Poisson manifolds.

The following standard lemma is  crucial to our proof.

\begin{lemma}[\cite{MR1421686}]\label{lem:bialgebroid-isomorphism}
  Let $(M,\pi,N)$ be a Poisson-Nijenhuis manifold and $\pi_N$  its second
 Poisson structure,
 as in Proposition \ref{prop:second-poisson-pn}, defined by Eq. \eqref{eq:SCE}.
 The pair of maps 
  $$\transpose N: (T^*M)_{\pi_N}\to (T^*M)_\pi, \;  N: (TM)_N \to TM$$
  defines a morphism of Lie bialgebroids 
  $$(\transpose N, N): ((T^*M)_{\pi_N}, TM) \to ((T^*M)_{\pi}, (TM)_N).$$
   In particular, if $N$ is invertible, the pair $(\transpose N,N)$ is an isomorphism of Lie bialgebroids.
\end{lemma}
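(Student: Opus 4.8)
The plan is to reduce the statement to two Lie algebroid morphism checks by invoking the standard description of base-preserving morphisms of Lie bialgebroids in the sense of Mackenzie--Xu \cite{MR1262213, MR1746902}: a vector bundle map $\phi : A \to B$ covering the identity is a morphism of Lie bialgebroids $(A, A^*) \to (B, B^*)$ exactly when $\phi$ is a morphism of Lie algebroids and its fibrewise transpose $\transpose\phi : B^* \to A^*$ is also a morphism of Lie algebroids. In our situation $A = (T^*M)_{\pi_N}$, $A^* = TM$, $B = (T^*M)_\pi$ and $B^* = (TM)_N$, and since the fibrewise transpose of $\transpose N : T^*M \to T^*M$ is $N : TM \to TM$, the pair $(\transpose N, N)$ is precisely a datum $(\phi, \transpose\phi)$. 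Thus the entire statement reduces to showing that $N : (TM)_N \to TM$ and $\transpose N : (T^*M)_{\pi_N} \to (T^*M)_\pi$ are each morphisms of Lie algebroids.

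First I would dispatch the tangent side. The anchor of $(TM)_N$ is $N$ while that of $TM$ is $\Id$, so the anchor condition $\Id \smalcirc N = N$ is automatic. For the bracket, writing $[X,Y]_N = [NX,Y] + [X,NY] - N[X,Y]$, one computes
$$ [NX, NY] - N[X,Y]_N = [NX,NY] - N[NX,Y] - N[X,NY] + N^2[X,Y], $$
which is exactly the Nijenhuis torsion of $N$. As $N$ is a Nijenhuis tensor this vanishes, giving $N[X,Y]_N = [NX,NY]$, so that $N : (TM)_N \to TM$ is a morphism of Lie algebroids.

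Next, the cotangent side, which I expect to be the main obstacle. The anchor condition here is $\pi^\sharp \smalcirc \transpose N = \pi_N^\sharp$, and this is immediate from the defining relation $\pi_N^\sharp = N \smalcirc \pi^\sharp = \pi^\sharp \smalcirc \transpose N$ of a Poisson--Nijenhuis structure (Proposition \ref{prop:second-poisson-pn}). The bracket condition $\transpose N [\alpha, \beta]_{\pi_N} = [\transpose N \alpha, \transpose N \beta]_\pi$ for all one-forms $\alpha,\beta$ is the substantive point. The plan is to exploit the Magri--Morosi characterization of Poisson--Nijenhuis compatibility (compare the discussion around Proposition \ref{prop:pn-iff-bialgebroid}) in the form of the deformation identity
$$ [\alpha, \beta]_{\pi_N} = [\transpose N \alpha, \beta]_\pi + [\alpha, \transpose N \beta]_\pi - \transpose N [\alpha, \beta]_\pi, $$
which expresses the Koszul bracket of $\pi_N$ as the $\transpose N$-deformation of that of $\pi$. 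Applying $\transpose N$ to both sides and comparing with $[\transpose N \alpha, \transpose N \beta]_\pi$ then reduces the bracket condition to the vanishing of the Nijenhuis torsion of $\transpose N$ relative to the Koszul bracket $[\cdot,\cdot]_\pi$, which is once more part of the Poisson--Nijenhuis compatibility. Establishing the displayed deformation identity from the compatibility conditions is the genuinely computational heart of the argument; I would either derive it directly by expanding $[\alpha,\beta]_\pi = \mathcal{L}_{\pi^\sharp\alpha}\beta - \mathcal{L}_{\pi^\sharp\beta}\alpha - d(\pi(\alpha,\beta))$ and invoking the vanishing of the Magri--Morosi concomitant, or cite it as a standard fact.

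Finally, the isomorphism claim is formal. If $N$ is invertible then $\transpose N$ and $N$ are vector bundle isomorphisms, and a bijective morphism of Lie algebroids automatically has a Lie algebroid morphism as inverse, since the inverse intertwines anchors and brackets. Hence $(\transpose N, N)$ admits the inverse pair $(\transpose{(N^{-1})}, N^{-1})$ and therefore defines an isomorphism of Lie bialgebroids.
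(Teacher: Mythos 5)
The paper does not actually prove this lemma: it is quoted verbatim from \cite{MR1421686} with no argument supplied, so there is no in-paper proof to measure you against. That said, your reconstruction is structurally correct and follows the route one would expect. Unwinding the paper's definition of a Lie bialgebroid morphism into the two Lie algebroid morphism conditions for $N:(TM)_N\to TM$ and $\transpose N:(T^*M)_{\pi_N}\to (T^*M)_\pi$ is exactly right, the tangent-side computation identifying $[NX,NY]-N[X,Y]_N$ with the Nijenhuis torsion $T_N(X,Y)$ is correct, and the anchor conditions are indeed immediate from $\pi_N^\sharp=\pi^\sharp\smalcirc\transpose N$. Two remarks on the cotangent side, which you correctly single out as the crux. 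First, the ``deformation identity'' you propose to derive or cite is literally \eq{eq:pn-compatibility-2} of Definition~\ref{def:pn-manifold} in this paper, i.e.\ one of the defining axioms of a Poisson--Nijenhuis structure here, so no derivation is needed. Second, after substituting that identity, the bracket condition $\transpose N[\alpha,\beta]_{\pi_N}=[\transpose N\alpha,\transpose N\beta]_\pi$ becomes exactly the vanishing of the Nijenhuis torsion of $\transpose N$ with respect to the Koszul bracket $[\cdot,\cdot]_\pi$; you call this ``once more part of the Poisson--Nijenhuis compatibility,'' but it is not one of the stated axioms --- it is a nontrivial consequence of them, and proving it is essentially the content of the reference \cite{MR1421686} that the paper cites for the whole lemma. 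So your argument is sound in architecture but defers its one genuinely substantive step to the same citation the paper uses; that is acceptable here, but you should state it as a cited proposition rather than as something contained in the definition. The final isomorphism claim is formal and correct as you present it.
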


Let us assume, throughout the remainder of this section, that we are given a Poisson--Nijenhuis manifold $(M, \pi, N)$ whose Nijenhuis tensor $N : TM \to TM$ is invertible. In 
this case, we also say that $(M, \pi, N)$ is a 
\emph{non-degenerate Poisson-Nijenhuis manifold}. Also, in order  to simplify notation,
 we denote by $A$ (resp.\ $A_N$)  the cotangent Lie algebroid $(T^*M)_\pi$
 (resp.\ $(T^*M)_{\pi_N}$) of $\pi$ (resp.\ $\pi_N$).

The pair $(\bPl(A_N)\toto M, \bomega')$ is a symplectic local groupoid,
 where $\bomega'$ is the symplectic form  as in
 Theorem \ref{thm:cattaneo-felder}. Let $\bpi'$ be the
corresponding Poisson structure on $\bPl(A_N)$, which is 
 the inverse of $\bomega'$. Then the Lie bialgebroid of the Poisson local groupoid $(\bPl(A_N)\toto M,\bpi')$ is $(A_N, TM)$.

On the other hand, we can construct another Poisson local
 groupoid $(\bPl(A)\toto M, \bpi_\bN)$ as follows.  
According to  Theorem \ref{thm:symplectic-nijenhuis-groupoids},
there is a  symplectic-Nijenhuis local groupoid 
 $(\bPl(A)\toto M, \bomega, \bN)$,
 which induces the Poisson--Nijenhuis structure $(M,\pi,N)$ on the unit space $M$. Let $\bpi$ be the Poisson bivector field associated to $\bomega$,
 and let $\bpi_\bN$ be defined, as before, by the relation
 $\bpi_\bN^\sharp = \bN \smalcirc \bpi^\sharp$. Then the Poisson local groupoid
 $(\bPl(A)\toto M, \bpi_\bN)$ has Lie bialgebroid $((T^*M)_\pi, (TM)_N)$.

Now, according to Lemma \ref{lem:bialgebroid-isomorphism},
we have a Lie bialgebroid  morphism
\begin{equation}\label{eq:nt-n}
  (\transpose N, N): ((T^*M)_{\pi_N}, TM) \to ((T^*M)_{\pi}, (TM)_N).
\end{equation}
Thus, from Theorem \ref{thm:mackenzie-xu}, it follows that the induced morphism
of local Lie groupoids  
\begin{equation}\label{eq:bpnt}
  \bP(\transpose N) : \bPl(A_N) \to \bP(A),
\end{equation}
as in Proposition \ref{prop:paths-to-groupoid} (i),
 is a Poisson map. Hence we have
\begin{equation}\label{eq:poisson-iso}
  \bP(\transpose N)_* \bpi' =\bpi_\bN.
\end{equation}
Since $N$ is invertible by assumption, the map in \eqref{eq:nt-n} is 
an isomorphism of Lie bialgebroids.
Therefore the map in \eqref{eq:bpnt} is
indeed an isomorphism of  Poisson local groupoids. 
In particular, the bivector field
 $\bpi_\bN$ is non-degenerate, since $\bpi'$ is non-degenerate.
 
Let $\bomega_\bN$ be the (necessarily multiplicative) symplectic form on $\bPl(A)$
whose Poisson bivector field is $\bpi_\bN$.
 Then $(\bPl(A)\toto M, \bomega_\bN)$ is a symplectic
local groupoid.  
Since $\bP(\transpose N)$ is a Poisson isomorphism, 
 we must have
\begin{equation}\label{eq:bomega-bomegaprime}
 \bomega_\bN = (\bP(\transpose N)^{-1})^*\bomega'.
\end{equation}

Summarizing, we have proved the following

\begin{proposition}\label{prop:summary-intermediate-PN}
  The pair $(\bPl(A)\toto M, \bar\omega_\bN)$ is a symplectic local groupoid 
which,  as a Poisson groupoid, has Lie bialgebroid $((T^*M)_\pi,(TM)_N)$.
\end{proposition}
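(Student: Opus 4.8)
The plan is to assemble the ingredients produced in the discussion above, since this proposition merely records their consequence; the only genuine content is transporting the symplectic structure across the isomorphism $\bP(\transpose N)$ and then reading off the infinitesimal data. I would proceed in three steps: first produce the symplectic form $\bomega_N$, then check that it is multiplicative, and finally identify the Lie bialgebroid.

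First I would establish that $\bpi_\bN$ is non-degenerate. By \eqref{eq:poisson-iso} the map $\bP(\transpose N)$ intertwines $\bpi'$ and $\bpi_\bN$, and because $N$ is invertible, Lemma \ref{lem:bialgebroid-isomorphism} makes $(\transpose N, N)$ an isomorphism of Lie bialgebroids; hence, via Theorem \ref{thm:mackenzie-xu} together with Proposition \ref{prop:paths-to-groupoid}, $\bP(\transpose N)\colon (\bPl(A_N),\bpi') \to (\bPl(A),\bpi_\bN)$ is an isomorphism of Poisson local groupoids. Since $\bpi'$ is the inverse of the Cattaneo--Felder symplectic form $\bomega'$ (Theorem \ref{thm:cattaneo-felder}) and a Poisson diffeomorphism carries a non-degenerate structure to a non-degenerate one, $\bpi_\bN$ is non-degenerate. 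I then set $\bomega_N := \bpi_\bN^{-1}$. Multiplicativity is fastest via \eqref{eq:bomega-bomegaprime}: as $\bP(\transpose N)$ is a groupoid isomorphism and $\bomega'$ is multiplicative, the pullback $\bomega_N = (\bP(\transpose N)^{-1})^*\bomega'$ is again a multiplicative symplectic form, so that $(\bPl(A)\toto M,\bomega_N)$ is a symplectic local groupoid.

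It remains to identify the Lie bialgebroid. Viewing $(\bPl(A)\toto M,\bpi_\bN)$ purely as a Poisson groupoid, its Lie algebroid is that of the underlying groupoid $\bPl(A)$, namely $(T^*M)_\pi$, which is intrinsic and independent of the chosen Poisson structure; the dual algebroid structure induced by $\bpi_\bN$ was already computed in Section \ref{sec:symplectic-nijenhuis-local-groupoids} to be $(TM)_N$. Equivalently, pushing the Lie bialgebroid $((T^*M)_{\pi_N},TM)$ of $(\bPl(A_N),\bpi')$ forward along the isomorphism $\bP(\transpose N)$ returns, by functoriality of the Lie functor together with Lemma \ref{lem:bialgebroid-isomorphism}, precisely $((T^*M)_\pi,(TM)_N)$. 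This yields the stated bialgebroid.

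The main obstacle --- indeed the only subtle point, since the hard work lives in the preceding sections --- is to keep the two viewpoints straight. As a symplectic groupoid, $(\bPl(A),\bomega_N)$ has base Poisson structure $\pi_N$ (Proposition \ref{prop:paris}), so its \emph{symplectic-groupoid} bialgebroid is $((T^*M)_{\pi_N},TM)$ with dual $TM$; yet the same object, regarded only as a Poisson groupoid, has bialgebroid $((T^*M)_\pi,(TM)_N)$ with dual $(TM)_N \neq TM$. One must remember that the groupoid's Lie algebroid stays $(T^*M)_\pi$ throughout, and that the two descriptions are exchanged by exactly the bialgebroid isomorphism $(\transpose N,N)$ of Lemma \ref{lem:bialgebroid-isomorphism}. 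This is the content encoded in the qualifier ``seen as a Poisson groupoid only'' in the statement.
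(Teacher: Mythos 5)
Your proposal is correct and follows essentially the same route as the paper: non-degeneracy of $\bpi_{\bN}$ is transported from $\bpi'$ through the Poisson isomorphism $\bP(\transpose N)$ induced by the Lie bialgebroid isomorphism $(\transpose N, N)$, multiplicativity of $\bomega_N$ is automatic, and the bialgebroid $((T^*M)_\pi,(TM)_N)$ is read off from the construction of $\bpi_{\bN}$ in the preceding section. Your closing remark correctly pinpoints the role of the qualifier ``seen as a Poisson groupoid only,'' which is exactly the distinction the paper intends.
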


Now fix $\nabla$ an $A$-connection on $A$,  and let $U\subset T^*M$ be
a  sufficiently small open neighborhood
 around the zero section, as in Proposition \ref{thm:exp-local-diffeo}. 
Define
\begin{align}\label{eq:pullback-omegaNbar}
  \begin{gathered}
    \pomega = \exp^*\,\bomega, \quad \text{and}\\
    \pomega_N = \exp^*\,\bomega_\bN,
  \end{gathered}
\end{align}
where $\exp : U \to \bPl(A)$   is
 the groupoid exponential map  associated to $\nabla$. The formula 
 \eqref{eq:pullback-omegabar-formula} still holds for $\pomega$, 
 since $(\bPl(A)\toto M, \bomega)$ is exactly the same symplectic local
 groupoid as in Theorem \ref{thm:cattaneo-felder}.
On the other hand,  we also  have 

\begin{proposition}
\label{pro:Kaifeng}
The symplectic form $\pomega_N\in \Omega^2(U)$  can be explicitly expressed
as follows:
\begin{equation}\label{eq:pullback-omegaNbar-formula}
  \pomega_N = \int_0^1 ((\transpose N)^{-1}\smalcirc \varphi^\axi_t)^*\omega_\can \; dt,
\end{equation}
where  $\omega_\can \in \Omega^2 (T^*M)$
is the canonical symplectic form on $T^*M$, and 
	$\varphi^\axi_t$ is the flow of the geodesic vector field of $\nabla$.
\end{proposition}

\begin{proof}
  By Proposition \ref{prop:paths-to-groupoid} (i),
 we have
  $$ \P(\transpose{(N^{-1})})^*\smalcirc q'^* = q^*\smalcirc \bP((\transpose N)^{-1})^* $$
  where $q : \Pl(A) \to \bPl(A)$ and $q' : \Pl(A_N) \to \bPl(A_N)$ are the 
 quotient maps as in \eqref{eq:q}.
 It is also simple to see that $\P((\transpose N)^{-1}) = (\P(\transpose N))^{-1}$,
 and  $\bP( (\transpose N)^{-1} ) = (\bP (\transpose N))^{-1}$. Since $q'^* \bomega' = \iota'^* \tomega_\can$, we have 
  $$q^* (\bP(\transpose N)^{-1})^* \bomega' =  P(\transpose{(N^{-1})})^*q'^* \bomega' = P(\transpose{(N^{-1})})^* \iota'^*\tomega_\can.$$
By commutative diagram \eqref{eq:leaf-space-exp}, we have
  $\exp^* = \Phi^* q^*$. Thus it follows that
  $$ \pomega_N = \exp^* (\bP(\transpose N)^{-1})^* \bomega' = \Phi^* q^* (\bP(\transpose N)^{-1})^* \bomega' = \Phi^* P(\transpose{(N^{-1})})^* \iota'^* \tomega_{\can}. $$ 
  Now  $\forall \  \la\in U$ and $u,v\in T_\la A$, we have
  \begin{align*}
    \pomega_N(u,v) & = 
\tilde\omega_{\can}(\iota'_* P((\transpose N)^{-1})_* \Phi_* u, \ \  \iota'_* P((\transpose N)^{-1})_* \Phi_* v) \\ 
    & = \int_0^1 \omega_{\can}((\tau [\iota'_* P((\transpose N)^{-1})_* \Phi_* u])(t), (\tau [\iota'_* P((\transpose N)^{-1})_* \Phi_* v])(t)) dt\\
    & = \int_0^1 \omega_{\can}( ((\transpose N)^{-1})_* (\varphi^\nabla_t)_* u, ((\transpose N)^{-1})_* (\varphi^\nabla_t)_* v ) dt \\
    & = \left(\int_0^1 ((\transpose N)^{-1} \smalcirc \varphi^\nabla_t)^* \omega_{\can}dt\right)(u,v),
  \end{align*}
where the second to last equality follows from \eq{eq:zaventem}.
\end{proof}

Combining Proposition \ref{prop:paris},
Proposition \ref{pro:Zurich} and Proposition \ref{pro:Kaifeng}, we
are finally led to  the following  main theorem of this section.

\begin{theorem}\label{thm:symplectic-realization-PN}
Let $(M,\pi, N)$ be a non-degenerate  Poisson--Nijenhuis manifold,
and $A=(T^*M)_\pi$ the cotangent Lie algebroid of the Poisson manifold
$(M, \pi)$.
  Fix $\nabla$ an $A$-connection on $A$ and let $\varphi^\nabla_t$ be the flow of the geodesic vector field of $\nabla$.
  Also let $U\subset A$ be a sufficiently small open neighborhood of the 
zero section of  $A$
as in Proposition \ref{thm:exp-local-diffeo}.
Then the following assertions hold.
\begin{itemize}
  \item [(i)]
 The projection $\pr|_U: U\to M$, together with the symplectic form $\pomega$ (resp.\ $
\pomega_N$),
defined by \eq{eq:pullback-omegabar-formula} 
(resp.\ \eq{eq:pullback-omegaNbar-formula}),
is   a symplectic realization of $\pi$ (resp.\ $\pi_N$).
\item [(ii)]   The $(1, 1)$-tensor 
  \begin{equation}\label{eq:pN}
    \pN:=(\pomega_N\bemol)^{-1} \smalcirc\pomega\bemol:TU\to TU
  \end{equation}
is a Nijenhuis tensor on $U$. Furthermore, the triple $(U, \pomega, \pN)$ is
a symplectic-Nijenhuis manifold. 
\item[(iii)] Denote by $\ppi$ the Poisson bivector field
 inverse to $\pomega$. Then the canonical projection $\pr|_U: U \to M$ is
 a Poisson--Nijenhuis map with respect to $(U, \ppi, \pN)$  and $(M, \pi, N)$.
\item  [(iv)] The zero section is a Lagrangian submanifold of $U$ with
respect to both $\pomega$ and $\pomega_N$.
\end{itemize}
\end{theorem}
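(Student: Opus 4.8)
The plan is to assemble Theorem~\ref{thm:symplectic-realization-PN} directly from the machinery already developed, rather than by any fresh computation. The two symplectic forms $\pomega$ and $\pomega_N$ on $U\subset T^*M$ are by \eq{eq:pullback-omegaNbar} the $\exp^\nabla$-pullbacks of $\bomega$ and $\bomega_N$ on the local groupoid $\bPl(A)$, and by Theorem~\ref{thm:exp-local-diffeo} the map $\exp^\nabla|_U$ is a diffeomorphism onto its image. Thus $(U,\pomega,\pN)$ is, up to this diffeomorphism, just the open subset $\exp^\nabla(U)\subset\bPl(A)$ carrying the pulled-back structures. The strategy for each of the three parts is to establish the corresponding fact upstairs on $\bPl(A)$ and transport it along $\exp^\nabla$.

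For part~(i), I would invoke the already-proved formulas \eq{eq:pullback-omegabar-formula} and \eq{eq:pullback-omegaNbar-formula} together with Theorem~\ref{thm:symplectic-realization-nonPN}: since $(\bPl(A)\toto M,\bomega)$ is the Cattaneo--Felder symplectic groupoid of $\pi$ and $(\bPl(A)\toto M,\bomega_N)$ is, by Proposition~\ref{prop:summary-intermediate-PN}, a symplectic groupoid with $\alpha_*\bpi_\bN=\pi_N$ (Proposition~\ref{prop:paris}), the projection $\pr|_U$ realizes $\pi$ via $\pomega$ and $\pi_N$ via $\pomega_N$. For part~(iii), I would note that in a symplectic groupoid the unit space $\epsilon(M)$ is Lagrangian with respect to the multiplicative form; applying this to both $\bomega$ and $\bomega_N$, and observing that $\exp^\nabla$ carries the zero section of $A$ onto $\epsilon(M)$ by \eq{eq:groupoid-exponential-proj}, gives that the zero section is Lagrangian for both $\pomega$ and $\pomega_N$.

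The heart of the matter, and the main obstacle, is part~(ii): the Nijenhuis property of $\pN=(\pomega_N\bemol)^{-1}\smalcirc\pomega\bemol$ and the symplectic-Nijenhuis compatibility. Here I would argue that the corresponding tensor $\bN=\bpi_\bN^\sharp\smalcirc\bomega\bemol$ on $\bPl(A)$ is exactly the multiplicative Nijenhuis tensor produced in Section~\ref{sec:symplectic-nijenhuis-local-groupoids} (see \eq{eq:bar-n}), so that $(\bPl(A)\toto M,\bomega,\bN)$ is a genuine symplectic Nijenhuis local groupoid; one then checks $\pN=(\exp^\nabla)^*\bN$, i.e.\ $(\exp^\nabla)_*\pN=\bN\smalcirc(\exp^\nabla)_*$, which follows from $\pomega=(\exp^\nabla)^*\bomega$, $\pomega_N=(\exp^\nabla)^*\bomega_N$ and the definitions. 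Since both the torsion-freeness of $\bN$ and the compatibility of $\bN$ with $\bomega$ hold on $\bPl(A)$ and both are preserved under pullback by the diffeomorphism $\exp^\nabla$, the triple $(U,\pomega,\pN)$ is a symplectic Nijenhuis manifold, so $(U,\ppi,\pN)$ is Poisson--Nijenhuis. Finally, the fact that $\pr|_U$ is a Poisson--Nijenhuis map is inherited from Proposition~\ref{prop:paris}, which asserts exactly that the source map $\alpha$ is a Poisson--Nijenhuis map, via $\pr|_U=\bar\alpha\smalcirc\exp^\nabla$ together with \eq{eq:groupoid-exponential-proj}.

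I expect the genuinely delicate point to be the clean identification $\pN=(\exp^\nabla)^*\bN$ and, relatedly, the verification that the $\bN$ appearing implicitly through $\bomega_N=(\bP(\transpose N)^{-1})^*\bomega'$ in \eq{eq:bomega-bomegaprime} agrees with the multiplicative Nijenhuis tensor of Section~\ref{sec:symplectic-nijenhuis-local-groupoids}; this is where Proposition~\ref{lem:lie-lifted-nijenhuis} (that $\LLie(\bN)=N^c$) does its work, ensuring the infinitesimal data match up and that no spurious second structure has been introduced. Once that identification is in place, every assertion of the theorem reduces to transporting a groupoid-level statement through the diffeomorphism $\exp^\nabla$, and no further hard analysis is required.
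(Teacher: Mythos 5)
Your proposal is correct and follows essentially the same route as the paper: the paper derives the theorem by declaring that it follows immediately from Proposition~\ref{prop:paris} and Proposition~\ref{prop:summary-intermediate-PN}, i.e.\ by transporting the symplectic Nijenhuis local groupoid structure $(\bPl(A)\toto M,\bomega,\bN)$ through the diffeomorphism $\exp^\nabla$ and using that the source map is a Poisson--Nijenhuis map, exactly as you outline. The one small remark is that the identification you flag as delicate is secured in the paper not by Proposition~\ref{lem:lie-lifted-nijenhuis} but by the uniqueness in Theorem~\ref{thm:mackenzie-xu}: both $\bpi_\bN$ and $\bP(\transpose N)_*\bpi'$ are multiplicative Poisson structures on $\bPl(A)$ with Lie bialgebroid $((T^*M)_\pi,(TM)_N)$, hence coincide.
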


Following Petalidou \cite{2015arXiv150107830P}, 
we will call  the symplectic-Nijenhuis manifold $(U, \pomega, \pN)$,
 endowed with the  projection $\pr|_U:U\to M$, a 
{\em symplectic realization of the non-degenerate Poisson--Nijenhuis 
manifold} $(M, \pi, N)$.  Note that a symplectic realization
of a  Poisson--Nijenhuis manifold can only exist when the
Nijenhuis tensor is invertible.

\begin{remark}
As an immediate consequence of \eq{eq:complete-lift-to-tstar}, 
Eq.  \eqref{eq:pullback-omegaNbar-formula} can be rewritten as
\begin{equation}
\label{eq:Fani}
  \pomega_N(u,v) = \int_0^1 (\varphi^\axi_t)^*\omega_{\text{can}}( (N^c)^{-1}u,v) \; dt
\end{equation}
$\forall \xi \in U$ and  $u,v\in T_{\xi}(T^*M)$. The explicit formula
 of Eq. \eqref{eq:Fani} is due to Petalidou \cite{2015arXiv150107830P}.
In fact, Theorem \ref{thm:symplectic-realization-PN} (i)-(ii)
 essentially
recovers a theorem  claimed by Petalidou  \cite{2015arXiv150107830P},
 which was obtained  by following closely the computational approach
 in \cite{MR2900786}. From the
discussion of this section, we see that 
both symplectic forms $\pomega$ and $\pomega_N$ are
 in fact  conceptually parts of the data involved in constructing 
 a (\emph{a priori} hidden) symplectic-Nijenhuis local groupoid. 
\end{remark}

\begin{remark}
Theorem~\ref{thm:symplectic-realization-PN}
was reproved using a different method 
in the preprint \cite{alej2018local}, 
which appeared after the present paper was posted on arXiv. 
See~\cite[Section 3.2]{alej2018local} for details. 
We also refer the reader to~\cite{MR2116732}
for results closely related to those in~\cite{alej2018local}.
\end{remark}
\color{black}

\section{Holomorphic Symplectic Realizations of Holomorphic Poisson Manifolds}
\label{section:PEK}

It remains to explain how Theorem \ref{thm:symplectic-realization-holomorphic} follows from Theorem \ref{thm:symplectic-realization-PN}. First, recall the following standard fact.

\begin{proposition}[\cite{MR2439547}]\label{prop:factor-4}
  Let $(X,\omega)$, where $\omega= \omega_R + i\omega_I \in \Omega^{2, 0}(X)$,
 be a holomorphic symplectic manifold. Denote by
 $\pi = \pi_R + i\pi_I \in \Gamma (\wedge^2 T^{1, 0}X)$  the associated holomorphic Poisson 
bivector field. Then the  real differential $2$-forms $\omega_R$
 and  $\omega_I \in \Omega^2(X)$ are
 symplectic,
 and their corresponding Poisson bivector fields
are  $4\pi_R$ and $-4\pi_I$,  respectively.
\end{proposition}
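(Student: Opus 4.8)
The plan is to relate the real and imaginary parts of the holomorphic structure to their complex counterparts by a direct algebraic unpacking of the nondegeneracy isomorphism $\pi^\#$, without invoking any groupoid machinery. First I would fix local holomorphic Darboux-type coordinates $z^k = x^k + i y^k$ in which $\omega = \tfrac12 \sum_{k} dz^k \wedge dw_k$ for conjugate holomorphic coordinates, or more invariantly work with the defining relation $\omega(\pi^\#\alpha,\cdot) = \alpha$ for all $\alpha\in T^*X$ (complexified appropriately). The goal is to extract, from the single complex identity $\omega\bemol \smalcirc \pi^\# = \Id$ on the holomorphic cotangent bundle, the two real identities that identify the Poisson bivectors of $\omega_R$ and $\omega_I$.

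The key computational step is to split the complex-bilinear holomorphic form $\omega$ into its action on real tangent vectors. Writing $\omega = \omega_R + i\omega_I$ as real two-forms on the underlying real manifold $X$, I would express $\omega_R$ and $\omega_I$ in terms of $\omega$ and the almost complex structure $J$, using that $\omega\in\Omega^{2,0}(X)$ means $\omega(Ju,v) = \omega(u,Jv) = i\,\omega(u,v)$. From these type conditions one reads off $\omega_R(Ju,v) = -\omega_I(u,v)$ and related identities, so that $\omega_R$ and $\omega_I$ are each determined by $\omega$ together with $J$. The factor of $4$ is precisely what emerges from this splitting: the conversion between the complex-bilinear pairing $\omega$ and the real-bilinear pairings $\omega_R,\omega_I$, combined with the corresponding splitting $\pi = \pi_R + i\pi_I$, produces the constant. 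Concretely, I would compute $\omega_R\bemol \smalcirc (4\pi_R)^\#$ and verify it equals $\Id$ on $TX$, and similarly check $\omega_I\bemol \smalcirc (-4\pi_I)^\# = \Id$, by feeding in the real and imaginary parts of the complex relation $\omega\bemol\smalcirc\pi^\# = \Id$ and carefully tracking the factors of $i$ and the role of $\transpose J$.

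The nondegeneracy and closedness claims are comparatively routine: since $\pi^\#$ is a complex-linear isomorphism, its real and imaginary parts assemble into the $\#$-maps of $4\pi_R$ and $-4\pi_I$, which are therefore invertible over $\RR$; and $d\omega=0$ splits into $d\omega_R = 0$ and $d\omega_I = 0$ because $d$ commutes with taking real and imaginary parts of a complex-valued form. Thus once the bivector identities are established, $\omega_R$ and $\omega_I$ are automatically closed nondegenerate real two-forms, hence symplectic.

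The main obstacle I anticipate is bookkeeping the constant $4$ and the sign discrepancy between the two cases (why $4\pi_R$ but $-4\pi_I$), since both arise from subtle interactions between complexification conventions and the $(2,0)$-type condition. The cleanest way to pin these down is to do the computation once in explicit coordinates: take $\omega = dz \wedge dw$ on $\CC^2$ with $\pi = \partial_z \wedge \partial_w$, write $z = x_1 + i y_1$, $w = x_2 + i y_2$, expand $\omega_R, \omega_I$ in the real coordinates, invert the resulting constant-coefficient forms, and read off that the Poisson tensors are exactly $4\pi_R$ and $-4\pi_I$. This model computation fixes the universal constants, and the invariant argument above then shows they are independent of the chosen coordinates.
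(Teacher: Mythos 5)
Your proposal is correct. Note that the paper itself gives no proof of this proposition --- it is stated as a known fact with a citation to \cite{MR2439547} --- so there is nothing to compare against except the standard argument, which is exactly what you describe: the identity between the real bivectors and the inverses of $\omega_R,\omega_I$ is a pointwise linear-algebra statement, so it suffices to verify it in a standard complex basis at each point (no holomorphic Darboux theorem is actually needed), and the model $\omega=dz\wedge dw$, $\pi=\partial_z\wedge\partial_w$ on $\CC^2$ does produce the claimed constants: one finds $\omega_R=dx_1\wedge dx_2-dy_1\wedge dy_2$, $\omega_I=dx_1\wedge dy_2+dy_1\wedge dx_2$, while $\pi_R=\tfrac14(\partial_{x_1}\wedge\partial_{x_2}-\partial_{y_1}\wedge\partial_{y_2})$ and $\pi_I=-\tfrac14(\partial_{x_1}\wedge\partial_{y_2}+\partial_{y_1}\wedge\partial_{x_2})$, the factor $\tfrac14$ coming from $\partial_z=\tfrac12(\partial_x-i\partial_y)$ versus $dz=dx+i\,dy$. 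Your observation that nondegeneracy of each of $\omega_R$ and $\omega_I$ comes for free once the inverse identities $\omega_R^\flat\circ(4\pi_R)^\sharp=\mathrm{Id}$ and $\omega_I^\flat\circ(-4\pi_I)^\sharp=\mathrm{Id}$ are established is the right way to handle that point, and closedness of the real and imaginary parts is immediate from $d\omega=0$ together with $\bar\partial\omega=0$.
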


We are now ready to conclude the proof of the main theorem of this paper.

\begin{proof} [Proof of Theorem \ref{thm:symplectic-realization-holomorphic}]
Let $(\holom{X}, \pi=\pi_R + i\pi_I)$ be a holomorphic Poisson manifold with almost complex structure $J$ and the underlying real manifold $X$. By Lemma \ref{lem:hp-is-pn}, 
 $(X,\pi_I,J)$ is a Poisson--Nijenhuis manifold. Let $A=(T^*X)_{\pi_I}$ be the cotangent Lie algebroid of the real Poisson manifold $(X,\pi_I)$. 
Also fix an affine connection $\nabla^{TX}$ on $X$, 
and denote by $\nabla^{T^*X}$ the induced
linear  connection on $T^*X$.
 Finally, let $\nabla$ be the $A$-connection on $A$
as in  Example \ref{ex:brussel}, which is defined by
$$\nabla_a b = \nabla^{T^*X}_{\rho(a)}b\quad \forall a,b\in \Gamma(A),$$
where $\rho : A \to TX$ is the anchor of $A$.

From Theorem \ref{thm:symplectic-realization-PN} (i), it follows that there is
 an open neighborhood $Y\subset T^*X$ of the zero section where the 
 symplectic forms $\uomega_R$ and $\uomega_I$,
given, respectively,  by  \eq{eq:pullback-omegaNbar-formula} and
 \eq{eq:pullback-omegabar-formula},
 together with the projection $\pr|_Y : Y \to X$, give 
 symplectic realizations of $\pi_R$ and $\pi_I$, respectively. Furthermore,
 by Theorem \ref{thm:symplectic-realization-PN} (ii),  the $(1,1)$-tensor
\begin{equation}
\label{eq:J}
\uJ := (\uomega_R^\flat)^{-1} \smalcirc \uomega_I^\flat : TY \to TY
\end{equation}
is  Nijenhuis,
and  $(Y, \pomega_I, \pJ)$ is a symplectic-Nijenhuis manifold.
Moreover, the canonical projection  $\pr|_Y : Y \to X$ is a Poisson-Nijenhuis
 map
according to  Proposition \ref{pro:Zurich}.

We have the following lemma.

\begin{lemma}\label{prop:N-almost-complex}
  The $(1, 1)$-tensor $\uJ$ in \eqref{eq:J}
 is an almost complex structure on $Y$, i.e.\ $\uJ^2 = -1$.
\end{lemma}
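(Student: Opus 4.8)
The plan is to recognize $\uJ$ as the complete lift $N^c$ (for $N=J$) and then to invoke that the complete lift of an integrable almost complex structure squares to $-\Id$.

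First I would make the identification $\uJ = J^c$ explicit. In the situation at hand $\uomega_I$ is the form of \eqref{eq:pullback-omegabar-formula} and $\uomega_R$ is the form of \eqref{eq:pullback-omegaNbar-formula} for $N=J$, and the tensor in question is $\uJ = (\uomega_R\bemol)^{-1}\smalcirc\uomega_I\bemol$, as in \eqref{eq:pN}. The decisive step is the rewriting \eqref{eq:Fani} of $\uomega_R$: combined with \eqref{eq:pullback-omegabar-formula} it reads
\[
  \uomega_R(u,v) = \int_0^1 [(\varphi^\nabla_t)^*\omega_\can]((N^c)^{-1}u, v)\, dt = \uomega_I((N^c)^{-1}u, v)
\]
for all $u,v$. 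Hence $\uomega_R\bemol = \uomega_I\bemol\smalcirc (N^c)^{-1}$, so that
\[
  \uJ = (\uomega_R\bemol)^{-1}\smalcirc\uomega_I\bemol = N^c\smalcirc(\uomega_I\bemol)^{-1}\smalcirc\uomega_I\bemol = N^c = J^c.
\]
Thus $\uJ$ is exactly the complete lift of $J$ to $T^*X$, and the claim $\uJ^2 = -\Id$ reduces to $(J^c)^2 = -\Id$.

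To prove $(J^c)^2 = -\Id$ I would work in bundle coordinates $(x^i, p_i)$. A direct computation from the defining relation $\omega_\can(N^c u, v) = d\theta_N(u,v)$ shows that $J^c$ is block lower-triangular with respect to the splitting of $TT^*X$ into horizontal directions $\partial_{x^i}$ and vertical directions $\partial_{p_i}$: it acts as $J$ on the base directions and as $\transpose J$ on the fibre directions, up to a purely vertical correction $V$ (linear in $p$ and built from the first derivatives of $J$). Because $J^2 = -\Id$ forces $(\transpose J)^2 = -\Id$ as well, the two diagonal blocks of $(J^c)^2$ are each $-\Id$; the only remaining term is the horizontal-to-vertical block $V\smalcirc J + \transpose J\smalcirc V$ of $(J^c)^2$.

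The main obstacle, and the only genuinely computational point, is to show this off-diagonal term vanishes. A short calculation identifies $V\smalcirc J + \transpose J\smalcirc V$ with a contraction against $p$ of the Nijenhuis torsion of $J$; since $J$ is the almost complex structure of a complex manifold it is integrable and its torsion vanishes, so the off-diagonal term is zero and $(J^c)^2 = -\Id$. Equivalently, one may quote the standard identity $(N^c)^2 = (N^2)^c$ for a torsion-free $(1,1)$-tensor $N$, together with $(-\Id)^c = -\Id$, to get $(J^c)^2 = (J^2)^c = (-\Id)^c = -\Id$ at once. I would emphasize that integrability is essential here: for an almost complex structure with nonvanishing torsion one would have $(J^c)^2 \neq -\Id$.
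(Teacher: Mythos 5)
Your second step --- that $(J^c)^2=-\Id$ for an integrable almost complex structure --- is fine; it is exactly the standard fact the paper quotes from Yano--Ishihara. The gap is in the first step, the identification $\uJ=J^c$, which does not follow from the stated formulas and is false in general away from the zero section. In \eqref{eq:pullback-omegaNbar-formula} the map $(\transpose N)^{-1}$ is applied \emph{after} the flow, so what \eqref{eq:complete-lift-to-tstar} actually yields is
\begin{equation*}
  \uomega_R(u,v)=\int_0^1 \omega_{\can}\bigl((N^{-1})^c\,(\varphi^\nabla_t)_*u,\;(\varphi^\nabla_t)_*v\bigr)\,dt,
\end{equation*}
with the complete lift evaluated at the \emph{moving} point $\varphi^\nabla_t(\lambda)$. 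To rewrite this as $\uomega_I\bigl((N^c)^{-1}u,v\bigr)$ with $(N^c)^{-1}$ sitting at $\lambda$ --- which is what your identity $\uomega_R\bemol=\uomega_I\bemol\smalcirc (N^c)^{-1}$ requires --- you would need the geodesic flow of an arbitrarily chosen connection to intertwine $(N^c)^{-1}|_{\lambda}$ with $(N^{-1})^c|_{\varphi^\nabla_t(\lambda)}$, i.e.\ to preserve the complete lift; nothing forces that. A useful sanity check on the general principle: the ``infinitesimal'' of the multiplicative form $\bomega$ is $\omega_{\can}$, yet its pullback $\uomega_I=(\exp^\nabla)^*\bomega$ is the nontrivial average $\int_0^1(\varphi^\nabla_t)^*\omega_{\can}\,dt$, not $\omega_{\can}$. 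Likewise $\uJ=(\exp^\nabla)_*^{-1}\smalcirc\bJ\smalcirc(\exp^\nabla)_*$ need not equal $\LLie(\bJ)=J^c$; conflating the exponential pullback with the infinitesimal is precisely the trap the authors attribute to the purely computational approach.

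The paper's proof routes around this: it establishes only the infinitesimal identity $\LLie(\bJ)=J^c$ (Proposition \ref{lem:lie-lifted-nijenhuis}), deduces $\LLie(\bJ^2)=(\LLie(\bJ))^2=(J^c)^2=-\Id$ from the integrability of $J$, and then uses the fact that a \emph{multiplicative} $(1,1)$-tensor on a local Lie groupoid is determined by its infinitesimal to conclude $\bJ^2=-\Id$ on the whole groupoid; pulling back by $\exp^\nabla$ then gives $\uJ^2=-\Id$ everywhere on $Y$. Your computation of $(J^c)^2$ via the block-triangular form and the vanishing of the Nijenhuis torsion can be kept verbatim as the middle step, but the pointwise claim $\uJ=J^c$ must be replaced by this multiplicativity argument.
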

\begin{proof}
  Recall that the $(1,1)$-tensor $\bJ: T\bPl(A) \to T\bPl(A)$, defined as in \eq{eq:bar-n}, is a local groupoid morphism with respect to the groupoid structure $\bPl(A)\toto M$ of Theorem \ref{thm:local-groupoid}. Also note that, by definition,
  $$\uJ = \exp_*^{-1} \smalcirc \bJ \smalcirc \exp_*.$$
  
On the other hand, we have $\LLie(\bJ) = J^c$ by 
Proposition \ref{lem:lie-lifted-nijenhuis}. Furthermore,
since $J$ is   an almost complex structure,
 it follows  (see \cite{MR1021489} for example) 
that  $(J^c)^2 = (J^2)^c = -1$. Thus $\LLie(\bJ^2)= (\LLie(\bJ))^{2} = -1$. Since  $\bJ$ is multiplicative on a local Lie groupoid,  it follows 
 that $\bJ^2 = -1$. 
Therefore  we have $\uJ^2 = -1$ as well. This  concludes the proof.
\end{proof}

Returning to the proof of Theorem \ref{thm:symplectic-realization-holomorphic}: since $\pJ$ is  already a  Nijenhuis tensor,  from Lemma
\ref{prop:N-almost-complex}, it follows that
 $ \pJ$  indeed induces  a complex structure on the  manifold $Y$, whose
underlying complex manifold is denoted by $\holom{Y}$.
Moreover, since $(Y, \pomega_I, \pJ)$ is a symplectic-Nijenhuis
manifold and its induced second Poisson structure is the
Poisson structure corresponding to $\pomega_R$, it follows that 
  $\pomega := \frac{1}{4}\left(\pomega_R - i \pomega_I\right) \in \Omega^2(Y)\otimes \CC$ yields a holomorphic symplectic form on $Y$ with
respect to the new complex structure $\pJ$ \cite{MR2439547}. 
  In particular, $(Y, \pomega, \pJ)$ is a holomorphic symplectic manifold. 
The triple $(Y, \pomega, \pJ)$   is indeed the underlying holomorphic  symplectic manifold of
the holomorphic symplectic local groupoid  integrating 
the given  holomorphic Poisson structure $\pi$ 
(see \cite[Theorem 3.22]{MR2545872} for an explanation of the factor
 $\frac{1}{4}$).
  Denote by $\ppi$ the associated holomorphic Poisson bivector field on $Y$. 
 From  Theorem \ref{thm:symplectic-realization-PN} (ii),
 Proposition \ref{prop:factor-4} and Proposition \ref{prop:pn-map},
it follows that
 the projection $\pr|_Y: Y \to X$ is  indeed
a holomorphic Poisson map with
respect to the  holomorphic Poisson structures 
$(Y, \ppi, \pJ)$ and $(X,\pi,J)$. This  concludes the proof.
\end{proof}

\appendix

\section{Lie bialgebroids and Poisson groupoids}

\begin{definition}\label{def:poisson-groupoid}
  A \emph{Poisson local groupoid} $(\rgpd\toto M, \bpi)$ is a local Lie groupoid $\rgpd\toto M$ endowed with a bivector field $\bpi \in \XX^2(\rgpd)$ such that the graph $\Lambda$ of multiplication in $\rgpd$:
$$ \Lambda \equiv \{ (x,y,x\cdot y) \mid (x,y)\in \rgpd\times \rgpd \text{ composable } \} \subset \rgpd \times \rgpd \times \bar\rgpd $$ 
is a coisotropic submanifold. Here $\bar\rgpd$ denotes $\rgpd$ endowed with the
  Poisson bivector field $-\bpi$.
\end{definition}

A bivector field $\bpi\in \XX^2(\rgpd)$ as in Definition \ref{def:poisson-groupoid} is also called multiplicative. In this context, the following is 
standard \cite{MR1262213}.

\begin{proposition}\label{prop:multiplicative-bivector}
  Let $A$ be the Lie algebroid of $\rgpd\toto M$. The bivector
field  $\bpi$ is multiplicative if and only if the map
 $\bpi^\sharp: T^*\rgpd \to T\rgpd$ is a local Lie groupoid morphism.
 Here $T^*\rgpd\toto A^*$ is the cotangent local Lie groupoid 
\cite{MR996653} and $T\rgpd\toto TM$ is the tangent local Lie groupoid of $\rgpd\toto M$.
\end{proposition}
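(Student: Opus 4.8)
The plan is to reformulate both sides of the equivalence as inclusions of subbundles over the graph $\Lambda$ of multiplication, and to recognize these subbundles as the graphs of the multiplications of the cotangent and tangent groupoids, respectively. First I would recall the standard criterion characterizing coisotropic submanifolds in terms of the sharp map: for a bivector $\Pi$ on a manifold $P$ and a submanifold $C\subset P$, $C$ is coisotropic with respect to $\Pi$ if and only if $\Pi^\sharp(N^*C)\subseteq TC$, where $N^*C$ denotes the conormal bundle of $C$ (the annihilator of $TC$). Applying this to $P=\rgpd\times\rgpd\times\bar\rgpd$, to the bivector $\Pi=\bpi\oplus\bpi\oplus(-\bpi)$ whose sharp map is $\bpi^\sharp\oplus\bpi^\sharp\oplus(-\bpi^\sharp)$, and to $C=\Lambda$, Definition \ref{def:poisson-groupoid} becomes the single condition
\[
\bigl(\bpi^\sharp\oplus\bpi^\sharp\oplus(-\bpi^\sharp)\bigr)(N^*\Lambda)\subseteq T\Lambda.
\]

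Next I would identify the two subbundles appearing here with groupoid multiplication graphs. On the one hand, applying the tangent functor to $\rgpd\toto M$ yields the tangent groupoid $T\rgpd\toto TM$, whose multiplication graph is exactly $T\Lambda\subset T\rgpd\times T\rgpd\times \overline{T\rgpd}$. On the other hand, the cotangent groupoid $T^*\rgpd\toto A^*$ has the property --- which I would recall and verify --- that, under the canonical identification $T^*(\rgpd\times\rgpd\times\rgpd)\cong T^*\rgpd\times T^*\rgpd\times T^*\rgpd$ together with the sign dictated by the $\bar\rgpd$ factor, its multiplication graph is precisely the conormal bundle $N^*\Lambda$ (see \cite{MR1262213}). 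With these two identifications in hand, the displayed inclusion says exactly that $\bpi^\sharp$ sends each composable triple of the cotangent groupoid to a composable triple of the tangent groupoid and respects the product; equivalently, the graph of $\bpi^\sharp$ is a subgroupoid of $T^*\rgpd\times T\rgpd$, i.e.\ $\bpi^\sharp$ is a morphism of local Lie groupoids over the base map $A^*\to TM$ obtained by restricting $\bpi^\sharp$ to the unit space $N^*M\cong A^*$. Compatibility of $\bpi^\sharp$ with source, target and units is then automatic, being encoded in the restriction of the multiplication graph to units; in particular, coisotropy of $\Lambda$ forces $M$ itself to be coisotropic, so that $\bpi^\sharp(N^*M)\subseteq TM$ and the base map is well defined.

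The conceptual content of the equivalence is thus purely formal once the graphs are matched; the one step requiring genuine care --- and the main obstacle --- is the identification of $N^*\Lambda$ with the cotangent groupoid multiplication. This demands writing out the cotangent groupoid source, target and multiplication explicitly and bookkeeping the signs introduced both by the $\bar\rgpd=(\rgpd,-\bpi)$ factor and by the sign conventions built into the cotangent groupoid structure, so that the annihilator condition defining $N^*\Lambda$ matches the composability relation of $T^*\rgpd$ on the nose. Once this matching is pinned down, both implications of the proposition follow simultaneously from the single inclusion above, with no further computation.
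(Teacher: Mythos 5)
Your argument is correct and is precisely the standard one: the paper states this proposition without proof, citing it as standard from \cite{MR1262213}, and your reduction of coisotropy of $\Lambda$ to the inclusion $\Pi^\sharp(N^*\Lambda)\subseteq T\Lambda$, followed by the identification of $T\Lambda$ and $N^*\Lambda$ (with the sign on the third factor) with the multiplication graphs of the tangent and cotangent groupoids, is exactly the argument given there. You also correctly isolate the one delicate point, namely matching $N^*\Lambda$ with the cotangent groupoid multiplication and checking that coisotropy of the unit section makes the base map $A^*\cong N^*M\to TM$ well defined.
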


\begin{definition}\label{def:symplectic-groupoid}
  A \emph{symplectic local groupoid} is a Poisson local groupoid $(\rgpd\toto M, \bpi)$ such that $\bpi$ is non-degenerate. 
\end{definition}

We now recall some fundamental facts regarding Lie bialgebroids. In 
the rest of this section, let $A$ be a Lie algebroid with anchor $\rho$ and
Lie  bracket $[\cdot,\cdot]$.

The Lie
 bracket, $[\cdot, \cdot]: \Gamma(A)\times \Gamma(A) \to \Gamma(A)$, can be 
extended to a bilinear bracket of multisections
 $\Gamma(\wedge^k A) \times \Gamma(\wedge^l A) \to \Gamma(\wedge^{k+l-1} A)$.
 We will denote both the initial bracket and its extension 
by  $[\cdot,\cdot]$. The triple $(\Gamma( \wedge^\bullet A), \wedge, [\cdot,\cdot])$ then forms a Gerstenhaber algebra
 \cite{MR1675117}.

Recall that Lie bialgebroids are a certain class of Lie algebroids $A$ 
for which the dual vector bundle $A^*$ also admits
 a compatible Lie algebroid structure. In order to define the compatibility condition, recall 
that the \emph{\Liealgebroid}
 of the Lie algebroid $A$ is the operator $d : \Gamma(\wedge^k A^*) \to \Gamma(\wedge^{k+1} A^*)$ defined by
\begin{align}
\label{eq:BRU}
 (d\lambda)( a_1 , \ldots, a_{k+1} ) & = \sum^{k+1}_{i=1} (-1)^{i+1} \rho(a_i)\cdot \lambda(a_1, \ldots, \hat a_i, \ldots, a_{k+1}) \\ & \quad\quad + \sum_{i<j} (-1)^{i+j} \lambda([a_i,a_j], a_1, \ldots, \hat a_i, \ldots, \hat a_j , \ldots, a_{k+1}).
\end{align}
for any $\lambda \in \Gamma(\wedge^k A^*)$, 
and any $a_1,\ldots, a_{k+1}\in \Gamma(A)$.

\begin{example}
  When $A=TM$ is the tangent Lie algebroid of a manifold $M$,
the Chevalley–Eilenberg  differential  $d$
coincides with  the De Rham differential operator  $d_{\text{DR}}$ 
 on $\Gamma(\wedge^\bullet T^*M) = \Omega^\bullet(M)$.
\end{example}

When $A^*$ happens to be a Lie algebroid as well, we denote by $d_* : \Gamma(\wedge^k A) \to \Gamma(\wedge^{k+1} A)$ the associated \Liealgebroid
 (acting on sections of $A \cong  (A^*)^*$). 

\begin{definition} [\cite{MR1262213, MR1421686}]
  Let $A$ be a Lie algebroid  such that $A^* $ also carries a Lie algebroid 
structure. Then $(A,A^*)$ is a \emph{Lie bialgebroid} if 
the Lie algebroid structures on $A$ and $A^* $
 are compatible in the following sense.
For any $a,a'\in \Gamma(A)$, one has
\begin{equation}\label{eq:bialgebroid}
  d_*[a,a'] = [d_*a,a'] + [a,d_*a'].
\end{equation}
\end{definition}

The compatibility condition (\ref{eq:bialgebroid}) is equivalent 
to asking that $d_*$ is a derivation of the Gerstenhaber algebra structure on $(\Gamma(\wedge^\bullet A), \wedge, [\cdot, \cdot])$ \cite{MR1675117}. 

\begin{example}
\label{ex:TSR}
  Let $A=TM$ be the tangent Lie algebroid and
 $A^*=(T^*M)_\pi$  the cotangent Lie algebroid of a Poisson manifold
 $(M,\pi)$. It is easy to see  that
$(A,A^*)$ is a Lie bialgebroid. In fact,
the graded Lie bracket  $[\cdot,\cdot] : \Gamma(\wedge^k A)\times
 \Gamma(\wedge^l A)\to \Gamma(\wedge^{k+1-1}  A)$ coincides with
 the Schouten bracket
 $[\cdot,\cdot]_S$ on $\Gamma(\wedge^\bullet TM)$, and 
 $d_* = [\pi,\cdot]_S$. Thus (\ref{eq:bialgebroid}) follows from the 
graded Jacobi identity of the Schouten brackets. 
\end{example}

\begin{definition}
  Let $(A,A^*)$ and $(B,B^*)$ be  Lie bialgebroids over
 the same base manifold $M$. A \emph{Lie bialgebroid morphism} 
$(\psi,  \transpose{\psi}) : (A,A^*) \to (B,B^*)$ is a vector bundle
 map $\psi : A \to B$ over the identity map such that
  \begin{itemize}
  \item[i.)] $\psi : A \to B$ is a Lie algebroid morphism, and
  \item[ii.)]  its dual $\transpose{\psi} : B^* \to A^*$ is also
 a Lie algebroid morphism.
  \end{itemize}
\end{definition}

One can prove that the definition of a Lie bialgebroid $(A,A^*)$ is symmetric
 in $A$ and $A^*$  \cite[Theorem 3.10]{MR1262213}.
 In particular, according to Example  \ref{ex:TSR}, we have

\begin{proposition}\label{prop:poisson-to-bialgebroid}
  Let $M$ be a Poisson manifold with Poisson bivector field $\pi \in \XX^2(M)$. Then $((T^*M)_\pi,TM)$ is a Lie bialgebroid.
\end{proposition}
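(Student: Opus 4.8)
The plan is to deduce the proposition from the preceding example together with the symmetry of the Lie bialgebroid axioms, rather than verifying the compatibility condition by hand. First I would recall what the example has already supplied: the pair $(TM, (T^*M)_\pi)$ is a Lie bialgebroid, with the Gerstenhaber bracket on $\Gamma(\wedge^\bullet TM)$ being the Schouten bracket $[\cdot,\cdot]_S$, the dual bundle $(T^*M)_\pi$ carrying the cotangent Lie algebroid structure, and the associated coboundary operator given by $d_* = [\pi,\cdot]_S$. The compatibility \eqref{eq:bialgebroid} --- that $d_*$ is a derivation of $[\cdot,\cdot]_S$ --- holds there by the graded Jacobi identity once one uses $[\pi,\pi]_S = 0$.

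The second step is simply to invoke Theorem 3.10 of \cite{MR1262213}, recalled in the text immediately above, which states that the notion of Lie bialgebroid is symmetric in $A$ and $A^*$: the pair $(A,A^*)$ satisfies \eqref{eq:bialgebroid} if and only if $(A^*, A)$ does. Applying this with $A = TM$ and $A^* = (T^*M)_\pi$ turns the example's statement into exactly the assertion that $((T^*M)_\pi, TM)$ is a Lie bialgebroid, which concludes.

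The only substantive ingredient is the symmetry theorem, which is quoted from \cite{MR1262213}, so there is essentially nothing left to prove; everything else is bookkeeping. If one instead wanted a self-contained argument avoiding that theorem, the main obstacle would be checking the compatibility with the roles of the two algebroids exchanged, namely that the de Rham differential $d_{\text{DR}}$ (the coboundary of the dual algebroid $TM$) is a derivation of the cotangent bracket $[\cdot,\cdot]_\pi$ on $\Omega^\bullet(M)$, i.e.\ $d_{\text{DR}}[\alpha,\beta]_\pi = [d_{\text{DR}}\alpha,\beta]_\pi + [\alpha, d_{\text{DR}}\beta]_\pi$ for one-forms $\alpha,\beta$. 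Unwinding the Koszul formula $[\alpha,\beta]_\pi = L_{\pi^\sharp\alpha}\beta - L_{\pi^\sharp\beta}\alpha - d(\pi(\alpha,\beta))$, this would again reduce to the integrability condition $[\pi,\pi]_S = 0$; but routing through the symmetry theorem is far cleaner and is the approach I would take.
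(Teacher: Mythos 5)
Your proposal is correct and follows exactly the paper's own route: the paper derives this proposition from the preceding example (that $(TM,(T^*M)_\pi)$ is a Lie bialgebroid with $d_*=[\pi,\cdot]_S$) together with the symmetry of the Lie bialgebroid axioms quoted from Theorem 3.10 of \cite{MR1262213}. Your additional sketch of a direct verification is a fine aside but is not needed, as you note.
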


The ``$d_*$'' operator of the Lie bialgebroid in Proposition \ref{prop:poisson-to-bialgebroid} is simply the De Rham differential operator.
 The following theorem is 
standard \cite{MR2911881, MR1262213, MR1746902}\footnote{In
literature, this theorem is  normally stated for global Lie groupoids,
 for which one needs to assume source connectedness and source simply
 connectedness.  The conclusion (as well as the proof) holds for local
 Lie groupoids without such topological assumptions.}, which extends a
well-known classical result of Drinfeld concerning Poisson
 Lie groups \cite{MR688240, MR934283}.

\begin{theorem}\mbox{}\label{thm:mackenzie-xu}
\begin{itemize}
\item[(i).] Lie bialgebroids $(A,A^*)$ are in one-to-one correspondence with Poisson local groupoids $(\rgpd\toto M,\bpi)$. 
\item[(ii).] The correspondence in (i) is functorial. More precisely,
 let $(A,A^*)$ and $(B,B^*)$ be Lie bialgebroids over $M$. Then 
morphisms $(\psi, \transpose{\psi}) : (A,A^*) \to (B,B^*)$ of Lie bialgebroids are in one-to-one correspondence with morphisms of the associated Poisson local groupoids 
  $$\bar\psi : (\rgpd_A\toto M, \bpi_A) \to (\rgpd_B\toto M, \bpi_B). $$
\item[(iii).] Let $(\rgpd\toto M, \bomega)$ be a symplectic local groupoid,
 and let $\bpi$ be the multiplicative Poisson bivector field  on $\Sigma$
inverse to $\bomega$. Then, as a Poisson groupoid, $(\rgpd\toto M, \bpi)$ has Lie bialgebroid $((T^*M)_\pi, TM)$. Here $TM$ is the tangent bundle Lie algebroid of $M$.
\end{itemize}
\end{theorem}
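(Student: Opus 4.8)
The plan is to prove the three parts by passing back and forth through the Lie functor, carrying out everything at the level of \emph{local} Lie groupoids so that the source-connectedness hypotheses of the global statement become unnecessary (cf.\ the footnote). The forward (differentiation) direction of (i) is the easy half. Starting from a Poisson local groupoid $(\rgpd\toto M,\bpi)$ with Lie algebroid $A$, Proposition \ref{prop:multiplicative-bivector} says that multiplicativity of $\bpi$ is equivalent to $\bpi^\sharp:T^*\rgpd\to T\rgpd$ being a morphism of the cotangent and tangent local Lie groupoids $T^*\rgpd\toto A^*$ and $T\rgpd\toto TM$. Applying the Lie functor produces a Lie algebroid morphism $\Lie(\bpi^\sharp)$, and, under the canonical identifications $\flippy_\rgpd:\Lie(T\rgpd)\to TA$ and $\flippypairy_\rgpd:\Lie(T^*\rgpd)\to T^*A$ appearing in Diagram \eqref{eq:poisson-groupoid-to-bialgebroid-diagram}, this morphism is precisely the data of a Lie algebroid structure on $A^*$. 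The compatibility condition \eqref{eq:bialgebroid} then holds automatically, being the infinitesimal shadow of the multiplicativity of $\bpi$; this yields the Lie bialgebroid $(A,A^*)$.

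The converse---integrating a Lie bialgebroid $(A,A^*)$ to a Poisson local groupoid---is the main obstacle. I would first integrate $A$ to a local Lie groupoid $\rgpd\toto M$, which always exists by \cite{MR0216409}, and then produce a multiplicative Poisson bivector $\bpi$ whose infinitesimal recovers the prescribed dual structure on $A^*$. The cleanest route is the Universal Lifting Theorem \cite{MR2911881}, which provides a bijection between multiplicative $k$-vector fields on $\rgpd$ and $k$-differentials on $A$. Under it, the data of the dual Lie algebroid $A^*$ together with the compatibility \eqref{eq:bialgebroid}---equivalently, the coboundary $d_*$ viewed as a $2$-differential on $A$---integrates to a unique multiplicative bivector $\bpi$, and the Lie bialgebroid condition translates precisely into $[\bpi,\bpi]=0$, i.e.\ into $\bpi$ being Poisson. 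That the two constructions of (i) are mutually inverse then follows from the faithfulness of the Lie functor on local Lie groupoids. The heart of the whole theorem is thus contained in the Universal Lifting Theorem, whose proof is the genuinely hard analytic/homological input.

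For the functoriality statement (ii), a Lie bialgebroid morphism $(\psi,\transpose\psi)$ has as its first component a Lie algebroid morphism $\psi:A\to B$, which integrates to a local groupoid morphism $\bar\psi:\rgpd_A\to\rgpd_B$ by Lie's second theorem for local groupoids; one then checks that $\bar\psi$ intertwines $\bpi_A$ and $\bpi_B$ by differentiating and invoking the uniqueness established in (i). For the symplectic refinement (iii), set $\bpi=\bomega^{-1}$ and $\pi=\alpha_*\bpi$. The Coste--Dazord--Weinstein theorem \cite{MR996653} identifies the Lie algebroid of the symplectic local groupoid $(\rgpd,\bomega)$ with the cotangent algebroid $(T^*M)_\pi$; its dual is then $TM$ with its standard Lie algebroid structure, for which the operator $d_*$ is just the de Rham differential, exactly as recorded after Proposition \ref{prop:poisson-to-bialgebroid}. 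Combining this identification of $A$ with the general correspondence of (i) produces the Lie bialgebroid $((T^*M)_\pi,TM)$, as claimed. Everything beyond the integration step in (i) is a formal consequence of the Lie functor and its faithfulness on local groupoids.
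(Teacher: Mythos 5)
The paper offers no proof of this theorem, citing it as standard from \cite{MR2911881, MR1262213, MR1746902}, and your sketch follows exactly the route of those references: differentiate the multiplicative bivector via Proposition \ref{prop:multiplicative-bivector} and Diagram \eqref{eq:poisson-groupoid-to-bialgebroid-diagram} for one direction, and integrate the $2$-differential $d_*$ by the Universal Lifting Theorem on a Pradines local integration of $A$ for the other, with (iii) reduced to Coste--Dazord--Weinstein. This is correct and is essentially the paper's intended argument; the only imprecision worth noting is that $[\bpi,\bpi]=0$ corresponds to $d_*^2=0$ (Jacobi for $A^*$), while the compatibility \eqref{eq:bialgebroid} is already encoded in $d_*$ being a $2$-differential, i.e.\ a derivation of the Gerstenhaber bracket.
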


For completeness, let us recall that, by a morphism of local Lie groupoids,
we mean a smooth map $\bar\psi_1 : \rgpd \to \rgpd'$,
  defined on  neighborhoods of the unit spaces of $\rgpd$ and $\rgpd'$,
together with a smooth map  $\bar\psi_0 : M \to M'$ such that
\begin{equation*}
  \begin{gathered}
  \begin{tikzpicture}
    \matrix (m) [matrix of math nodes, row sep=3em, column sep=4em] {
      \rgpd & \rgpd'\\
      M & M' \\
    };
    \path[-stealth]
    (m-1-1) edge node [above] {$\bar\psi_1$} (m-1-2)
    (m-1-2) edge [transform canvas={xshift=-0.3em}]  (m-2-2)
    (m-1-2) edge [transform canvas={xshift=0.3em}]  (m-2-2)
    (m-1-1) edge [transform canvas={xshift=-0.3em}]  (m-2-1)
    (m-1-1) edge [transform canvas={xshift=0.3em}]  (m-2-1)
    (m-2-1) edge node [below] {$\bar\psi_0$} (m-2-2);
  \end{tikzpicture}
\end{gathered}
\end{equation*}
 satisfies the usual axioms of a Lie groupoid morphism.

\section{Poisson--Nijenhuis manifolds}\label{app:pn-manifolds}

Recall that a $(1,1)$-tensor $N : TM\to TM$ on a smooth
 manifold $M$ is called \emph{Nijenhuis} if its Nijenhuis torsion $T_N : \wedge^2 TM \to TM$ vanishes, where
 \begin{equation}\label{eqn:nijenhuis-torsion-def}
   T_N(X,Y) = [NX,NY] - N([NX,Y] + [X,NY]) + N^2[X,Y], \quad \forall X,Y \in \XX(M).
 \end{equation}

\begin{definition}\label{def:pn-manifold}
  Let $\pi$ be a Poisson bivector field
 on $M$ and $N$ be a Nijenhuis $(1,1)$-tensor. We say  that
the triple $(M,\pi,N)$ is a \emph{Poisson--Nijenhuis manifold} \cite{MR773513, MR1077465}
 if $\pi$ and $N$ satisfy the following compatibility relations for all 
 $\xi, \eta \in \Omega^1(M)$:
 \begin{align}
   N\smalcirc \pi^\sharp &= \pi^\sharp \smalcirc \transpose{N}, \label{eq:pn-compatibility-1}\\
   [\xi, \eta]_{\pi_N} &= [\transpose{N}\xi,\eta]_\pi + [\xi, \transpose{N}\eta]_\pi - \transpose{N} [\xi, \eta]_\pi. \label{eq:pn-compatibility-2}
\end{align}
Here $\pi_N$ is the bivector field on $M$
 defined by $\pi_N^\sharp =   N\smalcirc \pi^\sharp$
and $[\cdot,\cdot]_{\pi_N}$
 is the associated bracket on $\Omega^1(M)$.
\end{definition}

The following is standard in the theory of Poisson--Nijenhuis manifolds \cite{MR773513,MR1077465,MR1390832}.

\begin{proposition}\label{prop:second-poisson-pn}
  Let $(M,\pi,N)$ be a Poisson--Nijenhuis manifold. Then the bivector
field  $\pi_N \in \XX^2(M)$ defined by the property that
\begin{equation}
\label{eq:ATH}
 \pi_N^\sharp =   N\smalcirc \pi^\sharp
\end{equation} 
is a Poisson bivector field.
\end{proposition}

An alternative description of various compatibility relations between $\pi$ and $N$ is summarized in the following well-known result.

\begin{theorem}[\cite{MR1345608, MR1077465}]\label{thm:alt-pn-breakdown}
  Let $\pi\in \XX^2(M)$ be a Poisson bivector field on a manifold $M$ and let $N: TM \to TM$ be a $(1,1)$-tensor. Then the tensor $\pi_N$ defined by
  $$ \pi_N(\xi,\eta) = \eta \big( N \pi^\sharp \xi \big), 
 \quad\quad \forall \xi, \ \eta\in \Omega^1(M) $$
  is skew-symmetric if and only if \eq{eq:pn-compatibility-1} holds.
 In this case, we  also have the following assertions:
  \begin{itemize}
  \item[(i)] $[\pi,\pi_N] = 0$ if \eq{eq:pn-compatibility-2} holds, 
and the converse holds if $\pi$ is non-degenerate; 
  \item[(ii)] $[\pi_N, \pi_N]=0$  if $N$ is Nijenhuis.
  \end{itemize}
\end{theorem}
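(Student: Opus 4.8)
The plan is to dispatch the skew-symmetry claim by a direct pairing computation, and then to reduce each of the two Schouten-bracket conditions in (i) and (ii) to an identity between the two standard concomitants of the pair $(\pi,N)$ — the Magri--Morosi concomitant and the Nijenhuis torsion — by evaluating the relevant trivector fields on exact one-forms.

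For the skew-symmetry of $\pi_N$, I would argue algebraically from the pairing. Writing $\pi_N(\alpha,\beta) = \langle\beta, N\pi^\sharp\alpha\rangle = \langle\transpose N\beta,\pi^\sharp\alpha\rangle$ and using that $\pi$ is skew (equivalently $\langle\gamma,\pi^\sharp\alpha\rangle = -\langle\alpha,\pi^\sharp\gamma\rangle$), one gets $\pi_N(\alpha,\beta) = -\langle\alpha,\pi^\sharp\transpose N\beta\rangle$, whereas $\pi_N(\beta,\alpha) = \langle\alpha, N\pi^\sharp\beta\rangle$. Hence $\pi_N(\alpha,\beta)+\pi_N(\beta,\alpha) = \langle\alpha, (N\pi^\sharp - \pi^\sharp\transpose N)\beta\rangle$ for all $\alpha,\beta$, so $\pi_N$ is skew if and only if $N\pi^\sharp = \pi^\sharp\transpose N$, which is exactly \eq{eq:pn-compatibility-1}. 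From here on I assume this holds, so that $\pi_N$ is a genuine bivector with $\pi_N^\sharp = N\pi^\sharp = \pi^\sharp\transpose N$.

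For (i) and (ii) I would reduce to computations on exact forms. Since $[\pi,\pi_N]$ and $[\pi_N,\pi_N]$ are skew trivector fields, hence $C^\infty(M)$-multilinear, it suffices to evaluate them on triples $df,dg,dh$, which span $T^*M$ pointwise. I would use the standard fact that for any bivector $P$ one has $\tfrac12[P,P](df,dg,dh) = \sum_{\mathrm{cyc}}\{f,\{g,h\}_P\}_P$, the Jacobiator of $\{u,v\}_P := P(du,dv)$; polarizing the (bilinear, graded-symmetric) Schouten bracket then gives, for two bivectors $P,Q$,
\begin{equation*}
  [P,Q](df,dg,dh) = \sum_{\mathrm{cyc}}\big(\{f,\{g,h\}_Q\}_P + \{f,\{g,h\}_P\}_Q\big).
\end{equation*}
Thus $[\pi_N,\pi_N]$ is twice the Jacobiator of $\{\cdot,\cdot\}_{\pi_N}$, while $[\pi,\pi_N]$ is the mixed Jacobiator of $\{\cdot,\cdot\}_\pi$ and $\{\cdot,\cdot\}_{\pi_N}$. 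The heart of the argument is to expand these using $\{f,g\}_{\pi_N} = \langle dg, N X_f\rangle = (NX_f)(g)$, where $X_f := \pi^\sharp(df)$, together with the Koszul formula $[\alpha,\beta]_\pi = L_{\pi^\sharp\alpha}\beta - L_{\pi^\sharp\beta}\alpha - d(\pi(\alpha,\beta))$ and the compatibility $N\pi^\sharp = \pi^\sharp\transpose N$. For (i), I expect the mixed Jacobiator to collapse, after collecting terms and using $[\pi,\pi]=0$ to kill the purely $\pi$-contributions, into the Magri--Morosi concomitant $C(N,\pi)(\alpha,\beta) := [\alpha,\beta]_{\pi_N} - [\transpose N\alpha,\beta]_\pi - [\alpha,\transpose N\beta]_\pi + \transpose N[\alpha,\beta]_\pi$ transported through $\pi^\sharp$, i.e.\ $[\pi,\pi_N](df,dg,dh)$ should be a cyclic sum of terms $\langle dh,\pi^\sharp C(N,\pi)(df,dg)\rangle$. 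Since $C(N,\pi)=0$ is precisely \eq{eq:pn-compatibility-2}, this yields $[\pi,\pi_N]=0$ whenever \eq{eq:pn-compatibility-2} holds; and when $\pi$ is nondegenerate, $\pi^\sharp$ is injective, so vanishing of the trivector forces $C(N,\pi)=0$, giving the converse. For (ii), the analogous expansion of the $\pi_N$-Jacobiator should, after again using $[\pi,\pi]=0$, reduce to the Nijenhuis torsion $T_N(X,Y)=[NX,NY]-N([NX,Y]+[X,NY])+N^2[X,Y]$ evaluated on Hamiltonian vector fields, so that $[\pi_N,\pi_N]$ vanishes if and only if $T_N\equiv 0$.

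The main obstacle is exactly this bookkeeping in the final step: cleanly isolating the concomitant $C(N,\pi)$ inside the mixed Jacobiator and the torsion $T_N$ inside the $\pi_N$-Jacobiator, while carefully tracking the many terms that cancel by virtue of $[\pi,\pi]=0$ and of \eq{eq:pn-compatibility-1}. A delicate point worth flagging is the asymmetric role of nondegeneracy: the forward implications hold in general because $C(N,\pi)$ and $T_N$ enter only through $\pi^\sharp$, whereas the converses require $\pi^\sharp$ to be injective, consistent with the hypothesis in (i). To organize the cancellations more conceptually one may instead invoke the graded Schouten calculus: under \eq{eq:pn-compatibility-1} one has $\pi_N = \tfrac12\,(N\!\cdot\!\pi)$ for the degree-zero derivation $N\!\cdot$ induced by $N$, and the Fr\"olicher--Nijenhuis identities express $[N\!\cdot\!\pi,N\!\cdot\!\pi]$ in terms of $N\!\cdot[N\!\cdot\!\pi,\pi]$, $(N\!\cdot)^2[\pi,\pi]$ and a torsion term built from $T_N$, which reproduces the same conclusions while making the appearance of $T_N$ transparent.
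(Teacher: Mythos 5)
The paper does not actually prove this theorem — it quotes it from Vaisman \cite{MR1345608} and Kosmann-Schwarzbach--Magri \cite{MR1077465} — so there is no internal proof to compare against; your argument has to stand on its own. The skew-symmetry part does: the computation $\pi_N(\alpha,\beta)+\pi_N(\beta,\alpha)=\langle \alpha,(N\pi^\sharp-\pi^\sharp\transpose{N})\beta\rangle$ is complete and correct. For (i) and (ii), however, you have written a plan rather than a proof: the entire content of the theorem is the pair of identities that you say you ``expect'' to emerge from the Jacobiator expansions (the mixed Jacobiator collapsing to $\sum_{\mathrm{cyc}}\langle \pi^\sharp\gamma, C(\pi,N)(\alpha,\beta)\rangle$, the $\pi_N$-Jacobiator to torsion terms). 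Announcing that the bookkeeping ``should collapse'' is exactly the step that must be carried out or cited; as written, (i) and (ii) are not established. The converse of (i) also needs a word beyond injectivity of $\pi^\sharp$, since what vanishes is a cyclic sum of contractions, not a single contraction.

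More seriously, the identity you expect for (ii) is false, so that step of your route would fail. Under \eq{eq:pn-compatibility-1} alone, the expansion of $\tfrac12[\pi_N,\pi_N](\alpha,\beta,\gamma)$ contains, besides the cyclic sum of $\langle\gamma, T_N(\pi^\sharp\alpha,\pi^\sharp\beta)\rangle$, additional terms in which the concomitant $C(\pi,N)$ is contracted with $\pi_N^\sharp\gamma$; these do not cancel using only $[\pi,\pi]=0$. Concretely, on $\RR^4$ with the standard symplectic $\pi$ take $N=aI$ for a nonconstant function $a$: then \eq{eq:pn-compatibility-1} holds, $T_N=0$, yet $\pi_N=a\pi$ satisfies $[\pi_N,\pi_N]=\pm 2a\,(\pi^\sharp da)\wedge\pi\neq 0$. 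Hence item (ii) is only correct when read together with \eq{eq:pn-compatibility-2} (which is how the cited references state it, and is the only situation in which the paper invokes it, namely via Proposition \ref{prop:second-poisson-pn}); your argument must therefore use the concomitant term and the hypothesis $C(\pi,N)=0$ to reach the torsion-only identity, and the ``only if'' direction of (ii) additionally requires $\pi$ non-degenerate, since $T_N$ only ever appears evaluated on $\mathrm{im}\,\pi^\sharp$. You flag the role of non-degeneracy for (i) but not the fact that your reading of (ii) claims an unconditional equivalence that cannot hold.
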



\begin{definition}
  A \emph{symplectic-Nijenhuis manifold} 
 is a Poisson--Nijenhuis manifold $(M,\pi,N)$ whose Poisson bivector field $\pi$ is non-degenerate.
\end{definition}
A symplectic-Nijenhuis manifold is also denoted by $(M,\omega,N)$,
where $\omega$ is the symplectic form corresponding to $\pi$.

The following theorem, due to Vaisman \cite{MR1345608}, essentially
asserts that  symplectic-Nijenhuis manifolds are equivalent to
biHamiltonian systems with one Poisson structure being
non-degenerate.

\begin{proposition}
\cite[Corollary 1.5]{MR1345608}
\label{pro:Vaisman}
Let $\pi$ and $\pi'$ be  compatible 
Poisson structures on a smooth manifold $M$, i.e.,
$$[\pi, \pi]=[\pi', \pi']=[\pi,  \pi']=0.$$
Assume that $\pi$ is non-degenerate. Then
$(M,\pi,N)$ is a symplectic-Nijenhuis manifold such that $\pi_N=\pi'$,
where $N$ is the $(1, 1)$-tensor on $M$ defined by
$N= (\pi')^\sharp \smalcirc  (\pi^\sharp)^{-1}: TM\to TM$.  
\end{proposition}

\begin{definition}\label{dfn:poisson-nijenhuis-map}
  Let $(X,\pi_X, N_X)$ and $(Y,\pi_Y, N_Y)$ be  Poisson--Nijenhuis manifolds. A \emph{Poisson--Nijenhuis map}
 is a smooth map $f: X\to Y$ such that 
  $$ f_* \smalcirc N_X = N_Y \smalcirc f_*, \quad\text{and}\quad f_* \pi_X = \pi_Y. $$
\end{definition}

If $f: X \to Y$ is a Poisson--Nijenhuis map, then $f_* \pi_{N_X} = \pi_{N_Y}$ as well. The following is easily seen.

\begin{proposition}\label{prop:pn-map}
  Let $(X,\pi=\pi_R + i\pi_I)$ and $(Y,\pi'=\pi'_R  +i\pi'_I)$ be 
holomorphic Poisson manifolds with almost complex structures $J_X$ and
 $J_Y$, respectively. Let $f : X \to Y$ be a smooth map. Then
\begin{itemize}
  \item[(i)] the map $f$ is  holomorphic Poisson  if and only if it
 is a Poisson--Nijenhuis map from $(X,\pi_I, J_X)$ to $(Y, \pi'_I, J_Y)$.
\item[(ii)] In particular, if $f$ is a holomorphic map,
 then $f$ is  holomorphic Poisson  if and only if $f_* \pi_I = \pi'_I$.
\end{itemize}
\end{proposition}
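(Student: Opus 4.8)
The plan is to exploit the fact that the ``second Poisson structure'' of the Poisson--Nijenhuis manifold $(X,\pi_I,J_X)$ is nothing but $\pi_R$, so that the two named notions differ only by data that is supplied automatically. First I would record the identity
$$ (\pi_I)_{J_X}^\sharp = \pi_I^\sharp\smalcirc\transpose{J_X} = \pi_R^\sharp, $$
in which the first equality is the definition of the second Poisson structure and the second is the relation of Lemma \ref{lem:hp-is-pn}; hence $(\pi_I)_{J_X}=\pi_R$, and likewise $(\pi'_I)_{J_Y}=\pi'_R$ on $Y$. I would also note that holomorphicity of $f$ is literally the Nijenhuis-compatibility $f_\ast\smalcirc J_X=J_Y\smalcirc f_\ast$ appearing in the definition of a Poisson--Nijenhuis map with $N_X=J_X$ and $N_Y=J_Y$, so the two notions share this half outright.

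For the forward implication, suppose $f$ is a Poisson--Nijenhuis map from $(X,\pi_I,J_X)$ to $(Y,\pi'_I,J_Y)$. Then $f_\ast\smalcirc J_X=J_Y\smalcirc f_\ast$, so $f$ is holomorphic, and $f_\ast\pi_I=\pi'_I$. By the standard fact, recorded just before the proposition, that a Poisson--Nijenhuis map also intertwines the second Poisson structures, we get $f_\ast(\pi_I)_{J_X}=(\pi'_I)_{J_Y}$, which by the identity above reads $f_\ast\pi_R=\pi'_R$. Thus $f$ is a holomorphic map that is simultaneously Poisson for $\pi_R$ and for $\pi_I$; since $f$ is holomorphic, $f_\ast$ respects the type decomposition $T_\CC=T^{1,0}\oplus T^{0,1}$, so these two real conditions assemble into $f_\ast\pi=\pi'$ for the holomorphic bivector $\pi=\pi_R+i\pi_I$, i.e.\ $f$ is a holomorphic Poisson map. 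For the converse, a holomorphic Poisson map is in particular holomorphic, hence satisfies $f_\ast\smalcirc J_X=J_Y\smalcirc f_\ast$, and is Poisson for $\pi$; taking imaginary parts gives $f_\ast\pi_I=\pi'_I$, so $f$ is a Poisson--Nijenhuis map as required. The ``in particular'' clause is then immediate: for holomorphic $f$ the single condition $f_\ast\pi_I=\pi'_I$ already makes $f$ a Poisson--Nijenhuis map, whence a holomorphic Poisson map by the forward implication.

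The step I expect to require the most care is the real/complex dictionary used twice above, namely that for a holomorphic $f$ the single complex condition $f_\ast\pi=\pi'$ is equivalent to the pair of real conditions $f_\ast\pi_R=\pi'_R$ and $f_\ast\pi_I=\pi'_I$. This is where one must keep track of the real-linear identification implicit in the decomposition $\pi=\pi_R+i\pi_I$ into real bivectors following \cite{MR2439547}; once that identification is fixed, the equivalence is a formal consequence of $f_\ast$ being complex-linear on $T^{1,0}$, and the remainder of the argument is the short algebraic manipulation displayed above. I would emphasize that, beyond this bookkeeping, no compatibility beyond Lemma \ref{lem:hp-is-pn} and the intertwining of second Poisson structures is needed, which is why the statement is as elementary as it appears.
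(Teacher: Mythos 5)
Your proof is correct and follows exactly the route the paper intends: the paper leaves this as ``easily seen'' immediately after recording the two ingredients you use, namely Lemma \ref{lem:hp-is-pn} (so that $(\pi_I)_{J_X}=\pi_R$) and the fact that a Poisson--Nijenhuis map intertwines the second Poisson structures. Your explicit handling of the real/imaginary bookkeeping for $f_*\pi=\pi'$ is the only part the paper omits, and you handle it correctly.
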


\bibliography{reference}
\bibliographystyle{plain}

\end{document}